\newtheorem{theorem}{Theorem}[section]
\newtheorem{lemma}[theorem]{Lemma}
\newtheorem{proposition}[theorem]{Proposition}
\theoremstyle{definition}
\definecolor{A}{rgb}{.75,1,.75}
\numberwithin{equation}{section}
\begin{document}

\title[0-Yokonuma-Hecke algebras]{Representations of $0$-Yokonuma-Hecke algebras}
\author[Weideng Cui]{Weideng Cui}
\address{School of Mathematics, Shandong University, Jinan, Shandong 250100, P.R. China.}
\email{cwdeng@amss.ac.cn}

\begin{abstract}
We give two different approaches to classifying the simple modules of $0$-Yokonuma-Hecke algebras $Y_{r,n}(0)$ over an algebraically closed field of characteristic $p$ such that $p$ does not divide $r.$ Using the isomorphism between the $0$-Yokonuma-Hecke algebra and $0$-Ariki-Koike-Shoji algebra, we in fact give another way to obtain the simple modules of the latter, which was previously studied by Hivert, Novelli and Thibon (Adv. Math. $\bf{205}$ (2006) 504-548). In the appendix, we give the classification of simple modules of the nil Yokonuma-Hecke algebra.
\end{abstract}



\maketitle
\medskip
\section{Introduction}
\subsection{}
The Iwahori-Hecke algebra $\mathcal{H}_{q}(W)$ can be regarded as a $q$-deformation of the group algebra of a finite Coxeter group $W$, which arises in the study of groups with $(B,N)$-pairs and plays an important role in the study of representations of finite groups of Lie type.

When $q=0$, we get the so-called $0$-Hecke algebra $\mathcal{H}_{0}(W)$, whose representation theory is very different from that of $\mathcal{H}_{q}(W)$ with a non-zero parameter $q.$ In [No], Norton classified the irreducible modules and indecomposable projective modules of $\mathcal{H}_{0}(W)$. In [Ca], Carter gave the decomposition numbers of it in type $A.$ $\mathcal{H}_{0}(W)$ has been studied extensively by various people; see [DHT, DY, Fa, He, HNT, Hu, KT, YL] and so on.

\subsection{}
Yokonuma-Hecke algebras were introduced by Yokonuma \cite{Yo} as a centralizer algebra associated to the permutation representation of a finite Chevalley group $G$ with respect to a maximal unipotent subgroup of $G$. By the presentation given by Juyumaya and Kannan \cite{Ju1, Ju2, JuK}, the Yokonuma-Hecke algebra $Y_{r,n}(q)$ (of type $A$) can be regraded as a deformation of the group algebra of the complex reflection group $G(r,1,n),$ which is isomorphic to the wreath product $(\mathbb{Z}/r\mathbb{Z})\wr \mathfrak{S}_{n}$, where $\mathfrak{S}_n$ is the symmetric group. Yokonuma-Hecke algebras have been studied in [ChPA, C, JaPA, Lu] and so on.

The modified Ariki-Koike algebra [SS], as a way of approximating the usual Ariki-Koike algebra, was first defined by Shoji [S] in order to give a Frobenius type formula for the characters of Ariki-Koike algebras. In [HNT], they studied the representation theory of the modified Ariki-Koike algebra when the parameter $q=0$, called $0$-Ariki-Koike-Shoji algebra; they classified its simple modules and projective modules, and described its Cartan invariants and decomposition matrices. Later on, Espinoza and Ryom-Hansen [ER] proved that the Yokonuma-Hecke algebra is isomorphic to the modified Ariki-Koike algebra when the parameter $q$ is invertible.

In this note, we shall consider the particular case $Y_{r,n}(0)$ when $q=0,$ which we call $0$-Yokonuma-Hecke algebras. In fact, we can easily see that we also have an isomorphism between the two algebras mentioned above when $q=0,$ that is, the $0$-Yokonuma-Hecke algebra is isomorphic to the $0$-Ariki-Koike-Shoji algebra with a particular choice of the parameters. In particular, we give two different approaches to classifying the simple modules of $0$-Yokonuma-Hecke algebras $Y_{r,n}(0)$ over an algebraically closed field of characteristic $p$ such that $p$ does not divide $r.$ Using the isomorphism between $0$-Yokonuma-Hecke algebras and $0$-Ariki-Koike-Shoji algebras, we in fact give another way to obtain the simple modules of the latter, which were previously studied in [HNT].

\section{Preliminaries}

\subsection{$0$-Yokonuma-Hecke algebras}
Let $r, n\in \mathbb{N},$ $r, n\geq1,$ and let $\zeta=e^{2\pi i/r}.$ Let $\mathbb{K}$ be an algebraically closed field of characteristic $p\geq 0$ such that $p$ does not divide $r$, which contains $\zeta$ and an arbitrary element $q.$

The Yokonuma-Hecke algebra $Y_{r,n}(q)$ is a $\mathbb{K}$-associative algebra generated by the elements $t_{1},\ldots,t_{n},g_{1},\ldots,g_{n-1}$ satisfying the following relations:
\begin{equation}\label{rel-def-Y1}\begin{array}{rclcl}
t_i^r\hspace*{-7pt}&=&\hspace*{-7pt}1 && \mbox{for all $i=1,\ldots,n$;}\\[0.2em]
t_it_j\hspace*{-7pt}&=&\hspace*{-7pt}t_jt_i &&  \mbox{for all $i,j=1,\ldots,n$;}\\[0.1em]
g_it_j\hspace*{-7pt}&=&\hspace*{-7pt}t_{s_i(j)}g_i && \mbox{for all $i=1,\ldots,n-1$ and $j=1,\ldots,n$;}\\[0.1em]
g_ig_j\hspace*{-7pt}&=&\hspace*{-7pt}g_jg_i && \mbox{for all $i,j=1,\ldots,n-1$ such that $\vert i-j\vert \geq 2$;}\\[0.1em]
g_ig_{i+1}g_i\hspace*{-7pt}&=&\hspace*{-7pt}g_{i+1}g_ig_{i+1} && \mbox{for all $i=1,\ldots,n-2$;}\\[0.1em]
g_{i}^{2}\hspace*{-7pt}&=&\hspace*{-7pt}q+(q-1)e_{i}g_{i} && \mbox{for all $i=1,\ldots,n-1$,}
\end{array}
\end{equation}
where $s_{i}$ is the transposition $(i,i+1)$, and for each $1\leq i\leq n-1$,
$$e_{i} :=\frac{1}{r}\sum\limits_{s=0}^{r-1}t_{i}^{s}t_{i+1}^{-s}.$$

Note that the elements $e_{i}$ are idempotents in $Y_{r,n}(q).$ For each $w\in \mathfrak{S}_{n},$ let $w=s_{i_1}\cdots s_{i_{r}}$ be a reduced expression of $w.$ By Matsumoto's lemma, the element $g_{w} :=g_{i_1}g_{i_2}\cdots g_{i_{r}}$ does not depend on the choice of the reduced expression of $w,$ that is, it is well-defined. Moreover, the following elements
\begin{align}\label{basis-yokonuam-hecke-1}
\{t_{1}^{a_{1}}\cdots t_{n}^{a_{n}}g_{w}\:|\:1\leq a_{1},\ldots,a_{n}\leq r\text{ and }w\in\mathfrak{S}_{n}\}
\end{align}
form a $\mathbb{K}$-basis of $Y_{r,n}(q).$

Let $i, k\in \{1,2,\ldots,n\}$ and set \begin{equation}\label{idempotents}e_{i,k} :=\frac{1}{r}\sum\limits_{s=0}^{r-1}t_{i}^{s}t_{k}^{-s}.\end{equation} Note that $e_{i,k}^{2}=e_{i,k}=e_{k,i},$ and that $e_{i,i+1}=e_{i}.$ It can be easily checked that
\begin{equation}\label{relations-cude}\begin{array}{rclcl}
t_{i}e_{j,k}\hspace*{-7pt}&=&\hspace*{-7pt}e_{j,k}t_{i} && \mbox{for all $i,j,k=1,\ldots,n$,}\\[0.1em]
e_{i,j}e_{k,l}\hspace*{-7pt}&=&\hspace*{-7pt}e_{k,l}e_{i,j} &&  \mbox{for all $i,j,k,l=1,\ldots,n$,}\\[0.1em]
e_ie_{k,l}\hspace*{-7pt}&=&\hspace*{-7pt}e_{s_i(k), s_i(l)}e_i && \mbox{for all $i=1,\ldots,n-1$ and $k,l=1,\ldots,n$,}\\[0.1em]
e_{j,k}g_{i}\hspace*{-7pt}&=&\hspace*{-7pt}g_{i}e_{s_{i}(j),s_{i}(k)} && \mbox{for all $i=1,\ldots,n-1$ and $j,k=1,\ldots,n$.}
\end{array}
\end{equation}
In particular, we have $e_{i}g_{i}=g_{i}e_{i}$ for all $i=1,2,\ldots,n-1.$

Let $\mathcal{T}$ be the commutative subalgebra of $Y_{r,n}(q)$ generated by $t_1,\ldots,t_{n}$ which is isomorphic to the group algebra of $(\mathbb{Z}/r\mathbb{Z})^{n}$ over $\mathbb{K}.$ A character $\chi$ of $\mathcal{T}$ over $\mathbb{K}$ is determined by the choice of $\chi(t_{j})\in \{\zeta_1,\ldots,\zeta_{r}\}$ for $1\leq j\leq n,$ where $\zeta_{i}=\zeta^{i}$ for $1\leq i\leq r.$ Let $\text{Irr}(\mathcal{T})$ denote the set of characters of $\mathcal{T}$ over $\mathbb{K}.$ The symmetric group $\mathfrak{S}_{n}$ acts by permutations on $\mathcal{T}$ and induces an action on $\text{Irr}(\mathcal{T})$ given by $w(\chi)(t_i)=\chi(t_{w^{-1}(i)})$ for $1\leq i\leq n,$ $w\in \mathfrak{S}_{n}$ and $\chi\in \text{Irr}(\mathcal{T}).$

For each $\chi\in \text{Irr}(\mathcal{T})$, let $E_{\chi}$ be the primitive idempotent of $\mathcal{T}$ associated to $\chi,$ that is, $\chi'(E_{\chi})=0$ if $\chi'\neq\chi$ and $\chi(E_{\chi})=1.$ Then $E_{\chi}$ can be explicitly written in terms of the generators as follows:
\begin{equation}\label{idempotents}
E_{\chi}=\prod_{1\leq i\leq n}\bigg(\frac{1}{r}\sum_{0\leq s\leq r-1}\chi(t_{i})^{s}t_{i}^{-s}\bigg).\end{equation}
By definition, we have, for each $1\leq i\leq n,$ $w\in \mathfrak{S}_{n}$ and $\chi\in \text{Irr}(\mathcal{T}),$
\begin{equation}\label{idempotent-relation-chi}
t_{i}E_{\chi}=E_{\chi}t_{i}=\chi(t_{i})E_{\chi}\qquad \text{ and }\qquad g_{w}E_{\chi}=E_{w(\chi)}g_{w}.\end{equation}
Thus, we immediately get that
\begin{equation}\label{idempotent-relation-chi-ti}
t_{i}=\sum_{\chi\in \text{Irr}(\mathcal{T})}\chi(t_{i})E_{\chi}\qquad \text{ for each }1\leq i\leq n.\end{equation}

The following presentation of $Y_{r,n}(q)$ is proved in [ER, Proposition 2] when $q$ is invertible, which in fact is a particular case of unipotent Hecke algebras considered in [Lu, $\S$31.2].
\begin{proposition}{\rm (See [ER, Proposition 2].)}\label{propo-presenta-2}
$Y_{r,n}(q)$ has a second presentation, which is generated by $g_{1},\ldots,g_{n-1}$ and $E_{\chi}$ $(\chi\in \mathrm{Irr}(\mathcal{T}))$ with relations$:$
\begin{equation*}\label{rel-def-Yokonuma2}\begin{array}{rclcl}
\sum\limits_{\chi\in \text{Irr}(\mathcal{T})}E_{\chi}\hspace*{-7pt}&=&\hspace*{-7pt}1; &&\\[0.2em]
E_{\chi'}E_{\chi}\hspace*{-7pt}&=&\hspace*{-7pt}\delta_{\chi', \chi}E_{\chi}; &&  \\[0.1em]
g_iE_{\chi}\hspace*{-7pt}&=&\hspace*{-7pt}E_{s_{i}(\chi)}g_i && \mbox{for all $1\leq i\leq n-1;$}\\[0.1em]
g_ig_j\hspace*{-7pt}&=&\hspace*{-7pt}g_jg_i && \mbox{for all $1\leq i, j\leq n-1$ such that $\vert i-j\vert > 1;$}\\[0.1em]
g_ig_{i+1}g_i\hspace*{-7pt}&=&\hspace*{-7pt}g_{i+1}g_ig_{i+1} && \mbox{for all $1\leq i\leq n-2;$}\\[0.1em]
g_{i}^{2}\hspace*{-7pt}&=&\hspace*{-7pt}q+(q-1)\sum\limits_{\substack{\chi\in \mathrm{Irr}(\mathcal{T})\\s_{i}(\chi)=\chi}}E_{\chi}g_{i} && \mbox{for all $1\leq i\leq n-1.$}
\end{array}
\end{equation*}
\end{proposition}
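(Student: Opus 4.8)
The plan is to realize the algebra defined by the second presentation as an isomorphic copy of $Y_{r,n}(q)$ by writing down explicit mutually inverse homomorphisms. Let $A$ denote the $\mathbb{K}$-algebra with generators $\widetilde g_1,\ldots,\widetilde g_{n-1}$ and $\widetilde E_\chi$ $(\chi\in\mathrm{Irr}(\mathcal{T}))$ and the relations listed in the statement; the goal is to produce algebra homomorphisms $\varphi\colon A\to Y_{r,n}(q)$ and $\psi\colon Y_{r,n}(q)\to A$ that are inverse to each other.

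For $\varphi$ I would set $\widetilde g_i\mapsto g_i$ and $\widetilde E_\chi\mapsto E_\chi$, the latter being the primitive idempotent of $\mathcal{T}$ with its explicit expression in the $t_i$. Checking that the defining relations of $A$ survive: $\sum_\chi E_\chi=1$ and $E_{\chi'}E_\chi=\delta_{\chi',\chi}E_\chi$ just say the $E_\chi$ are primitive orthogonal idempotents of $\mathcal{T}$ with sum $1$; the intertwining $g_iE_\chi=E_{s_i(\chi)}g_i$ is the second identity in \eqref{idempotent-relation-chi}; the braid relations already hold in $Y_{r,n}(q)$; and for the quadratic relation one uses \eqref{idempotent-relation-chi-ti} together with orthogonality of the $E_\chi$ to obtain
$$e_i=\frac1r\sum_{s=0}^{r-1}t_i^st_{i+1}^{-s}=\sum_{\chi\in\mathrm{Irr}(\mathcal{T})}\Bigl(\frac1r\sum_{s=0}^{r-1}\bigl(\chi(t_i)\chi(t_{i+1})^{-1}\bigr)^{s}\Bigr)E_\chi=\sum_{\substack{\chi\in\mathrm{Irr}(\mathcal{T})\\ s_i(\chi)=\chi}}E_\chi,$$
since the inner geometric sum of $r$-th roots of unity is $1$ exactly when $\chi(t_i)=\chi(t_{i+1})$, i.e. $s_i(\chi)=\chi$, and $0$ otherwise; substituting into $g_i^2=q+(q-1)e_ig_i$ yields the quadratic relation of $A$. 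This identity $e_i=\sum_{s_i(\chi)=\chi}E_\chi$ is the conceptual core of the argument.

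For $\psi$ I would set $g_i\mapsto\widetilde g_i$ and $t_i\mapsto\sum_\chi\chi(t_i)\widetilde E_\chi$, the formula being forced by \eqref{idempotent-relation-chi-ti}, and then verify the defining relations \eqref{rel-def-Y1} using $\widetilde E_{\chi'}\widetilde E_\chi=\delta_{\chi',\chi}\widetilde E_\chi$ and $\widetilde g_i\widetilde E_\chi=\widetilde E_{s_i(\chi)}\widetilde g_i$. For instance $t_i^r\mapsto\sum_\chi\chi(t_i)^r\widetilde E_\chi=\sum_\chi\widetilde E_\chi=1$ because each $\chi(t_i)$ is an $r$-th root of unity; the $t_i$ commute since the $\widetilde E_\chi$ are orthogonal; $g_it_j\mapsto\sum_\chi\chi(t_j)\widetilde E_{s_i(\chi)}\widetilde g_i=\sum_{\chi'}\chi'(t_{s_i(j)})\widetilde E_{\chi'}\widetilde g_i$ is the image of $t_{s_i(j)}g_i$; and the image of $\frac1r\sum_s t_i^st_{i+1}^{-s}$ equals $\sum_{s_i(\chi)=\chi}\widetilde E_\chi$ by the same root-of-unity computation carried out inside $A$, so that $g_i^2\mapsto q+(q-1)\sum_{s_i(\chi)=\chi}\widetilde E_\chi\widetilde g_i$, matching the quadratic relation of $A$.

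It then remains to see that $\varphi$ and $\psi$ are mutually inverse. For $\varphi\circ\psi=\mathrm{id}_{Y_{r,n}(q)}$ it suffices to check on the generators $t_i,g_i$, and $\varphi(\psi(t_i))=\sum_\chi\chi(t_i)E_\chi=t_i$ is precisely \eqref{idempotent-relation-chi-ti}. For $\psi\circ\varphi=\mathrm{id}_A$ the only nontrivial point is the $\widetilde E_\chi$: applying $\psi$ to the explicit polynomial expression of $E_\chi$ in the $t_i$ and collapsing with $\widetilde E_{\chi'}\widetilde E_\chi=\delta_{\chi',\chi}\widetilde E_\chi$ and the geometric-sum identity reduces the product over $i$ to $\widetilde E_\chi$, because a character of $\mathcal{T}$ is determined by its values on $t_1,\ldots,t_n$. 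No input beyond the relations \eqref{idempotent-relation-chi}--\eqref{idempotent-relation-chi-ti} already established is needed, so the work is purely the bookkeeping in these relation checks, with the identification of $e_i$ as a partial sum of the $E_\chi$ being the one step that really matters.
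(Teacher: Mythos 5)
Your argument is correct. Note, however, that the paper itself offers no proof of this proposition: it simply cites [ER, Proposition 2], where the statement is established for invertible $q$ (as a special case of Lusztig's unipotent Hecke algebras), and elsewhere remarks that the case $q=0$ follows "easily." Your write-up therefore supplies something the paper leaves implicit: a self-contained verification, valid uniformly in $q$ (in particular for $q=0$, which is the case the paper actually needs), via explicit mutually inverse homomorphisms $\varphi$ and $\psi$. The crux is exactly where you put it, namely the identity
\[
e_i=\frac1r\sum_{s=0}^{r-1}t_i^{s}t_{i+1}^{-s}=\sum_{\substack{\chi\in\mathrm{Irr}(\mathcal{T})\\ s_i(\chi)=\chi}}E_\chi,
\]
which converts the quadratic relation of the first presentation into that of the second; the geometric-sum-of-roots-of-unity computation behind it uses precisely the standing hypothesis that $p$ does not divide $r$ (so that $1/r$ exists and the $r$-th roots of unity are distinct), and your reindexing $\chi\mapsto s_i(\chi)$ in the check of $g_it_j=t_{s_i(j)}g_i$ and your collapse of $\psi(E_\chi)$ to $\widetilde E_\chi$ via the fact that a character of $\mathcal{T}$ is determined by its values on $t_1,\ldots,t_n$ are both sound. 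Since both maps are algebra homomorphisms, checking that the composites fix the generators is enough, so the proof is complete as written.
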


By \eqref{basis-yokonuam-hecke-1}, we can easily get that the following elements
\begin{align}\label{basis-yokonuam-hecke-2}
\{E_{\chi}g_{w}\:|\:\chi\in \mathrm{Irr}(\mathcal{T})\text{ and }w\in\mathfrak{S}_{n}\}
\end{align}
form a $\mathbb{K}$-basis of $Y_{r,n}(q).$

The following presentation of $Y_{r,n}(q)$ is proved in [ER, Theorem 7] when $q$ is invertible,, which claims that $Y_{r,n}(q)$ is isomorphic to the modified Ariki-Koike algebra defined in [S, Section 3.6] with a particular choice of the parameters $u_{i},$ that is, $u_{i}=\zeta^{i}$ for $1\leq i\leq r.$
\begin{proposition}{\rm (See [ER, Theorem 7].)}\label{propo-presenta-3}
$Y_{r,n}(q)$ has a third presentation, which is generated by $h_{1},\ldots,h_{n-1}$ and $w_{1},\ldots,w_{n}$ with relations$:$
\begin{align*}
w_i^r&=1 \quad \qquad\qquad\qquad\qquad\mbox{for all $1\leq i\leq n;$}\\[0.1em]
w_iw_j&=w_jw_i \qquad\qquad\qquad\quad\hspace{1.4mm}  \mbox{for all $1\leq i, j\leq n;$}\\[0.1em]
h_{i}h_{j}&=h_{j}h_{i}\qquad \quad\qquad\qquad\hspace{2.1mm}\mbox{for all $1\leq i, j\leq n-1$ such that $\vert i-j\vert > 1;$}\\[0.1em]
h_ih_{i+1}h_i&=h_{i+1}h_ih_{i+1}  \qquad \quad\quad \hspace{3mm} \mbox{for all $1\leq i\leq n-2;$}\\[0.1em]
h_{i}w_i&=w_{i+1}h_{i}-\Delta^{-2}\sum_{c_{1}< c_{2}}(\zeta^{c_2}-\zeta^{c_{1}})(q-1)F_{c_1}(w_{i})F_{c_2}(w_{i+1}) \hspace{2.5mm} \mbox{for all $1\leq i\leq n-1;$}\\[0.1em]
h_{i}(w_i+w_{i+1})&=(w_i+w_{i+1})h_{i} \qquad \quad \hspace{2.2mm} \mbox{for all $1\leq i\leq n-1;$}\\[0.1em]
h_{i}w_{l}&=w_{l}h_{i} \qquad \qquad \qquad\quad \hspace{2.0mm} \mbox{for all $l\neq i, i+1$ and $1\leq i\leq n-1;$}\\[0.1em]
h_{i}^{2}&=q+(q-1)h_{i} \qquad \qquad \hspace{-0.5mm}\mbox{for all $1\leq i\leq n-1.$}
\end{align*}
\end{proposition}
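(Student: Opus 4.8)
The strategy is to exhibit explicit elements of $\YH$ realizing the third presentation and then to check bijectivity of the resulting map by a dimension count. Let $B$ denote the $\mathbb{K}$-algebra defined by the relations of the statement. Using the first presentation \eqref{rel-def-Y1} (equivalently the second one of Proposition~\ref{propo-presenta-2}), I would define $\phi\colon B\to\YH$ on generators by $w_i\mapsto t_i$ $(1\le i\le n)$ and $h_i\mapsto g_i+\delta_i$ $(1\le i\le n-1)$, where
\[
\delta_i:=(q-1)\sum_{1\le c_1<c_2\le r}E^{(i)}_{c_1,c_2},\qquad
E^{(i)}_{c_1,c_2}:=\sum_{\substack{\chi\in\mathrm{Irr}(\mathcal{T})\\ \chi(t_i)=\zeta^{c_1},\,\chi(t_{i+1})=\zeta^{c_2}}}E_{\chi}
\]
is $(q-1)$ times the idempotent of $\mathcal{T}$ projecting onto the characters with $t_i$-value $\zeta^{c_1}$ and $t_{i+1}$-value $\zeta^{c_2}$. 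By \eqref{idempotent-relation-chi-ti} each $E^{(i)}_{c_1,c_2}$ is a polynomial in $t_i,t_{i+1}$, and on unwinding the definitions of $F_c$ and $\Delta$ in [S] one identifies $\delta_i$ with $\Delta^{-2}(q-1)\sum_{c_1<c_2}F_{c_1}(w_i)F_{c_2}(w_{i+1})$. The choice of $\delta_i$ is forced by the twisted relation: since $\delta_i\in\mathcal{T}$ commutes with $t_i,t_{i+1}$ while $g_it_i=t_{i+1}g_i$ by \eqref{rel-def-Y1}, one computes
\[
(g_i+\delta_i)t_i-t_{i+1}(g_i+\delta_i)=(t_i-t_{i+1})\delta_i=(q-1)\sum_{1\le c_1<c_2\le r}(\zeta^{c_1}-\zeta^{c_2})E^{(i)}_{c_1,c_2},
\]
which is precisely the element $-\Delta^{-2}\sum_{c_1<c_2}(\zeta^{c_2}-\zeta^{c_1})(q-1)F_{c_1}(w_i)F_{c_2}(w_{i+1})$ prescribed in the statement.

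Next I would check the remaining relations of $B$, so that $\phi$ is a well-defined homomorphism. The relations $w_i^r=1$, $w_iw_j=w_jw_i$ are immediate; $h_iw_l=w_lh_i$ $(l\ne i,i+1)$ and $h_i(w_i+w_{i+1})=(w_i+w_{i+1})h_i$ hold because $\delta_i$ is a polynomial in $t_i,t_{i+1}$ and, by \eqref{rel-def-Y1}, $g_i$ commutes with $t_l$ $(l\ne i,i+1)$ and with $t_i+t_{i+1}$. The two substantial relations are the quadratic and braid relations ($h_ih_j=h_jh_i$ for $|i-j|>1$ being trivial, as the two summands and the two $g$'s pairwise commute). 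For the quadratic relation, write $g_i\delta_i=\delta_i'g_i$, where $\delta_i'=(q-1)\sum_{c_1<c_2}E^{(i)}_{c_2,c_1}$ is obtained from $\delta_i$ by interchanging the $t_i$- and $t_{i+1}$-values (using $g_iE_\chi=E_{s_i(\chi)}g_i$ from \eqref{idempotent-relation-chi}); then, since $g_i^2=q+(q-1)e_ig_i$,
\[
(g_i+\delta_i)^2=q+\big((q-1)e_i+\delta_i+\delta_i'\big)g_i+\delta_i^2 .
\]
As $e_i=\sum_cE^{(i)}_{c,c}$, splitting $\sum_{c_1,c_2}E^{(i)}_{c_1,c_2}=1$ into the parts $c_1<c_2$, $c_1>c_2$ and $c_1=c_2$ shows $(q-1)e_i+\delta_i+\delta_i'=(q-1)\cdot 1$, while $\delta_i^2=(q-1)\delta_i$ because $\delta_i$ is $(q-1)$ times an idempotent; hence $(g_i+\delta_i)^2=q+(q-1)(g_i+\delta_i)$, as required. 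The braid relation $h_ih_{i+1}h_i=h_{i+1}h_ih_{i+1}$ for the elements $g_i+\delta_i$ I would check by inserting $1=\sum_\chi E_\chi$ and reducing, via \eqref{idempotent-relation-chi}, to a computation on each $\mS_3$-orbit of characters, where $\delta_i,\delta_{i+1}$ act as explicit scalar combinations of orthogonal idempotents and the identity follows from $g_ig_{i+1}g_i=g_{i+1}g_ig_{i+1}$ of \eqref{rel-def-Y1}; I expect this case analysis to be the main technical obstacle.

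Finally I would show $\phi$ is bijective. Surjectivity is clear: $t_i=\phi(w_i)$ is in the image, hence so is each $F_c(t_i)$, hence $g_i=\phi(h_i)-\delta_i$ is in the image, and $\{t_i,g_i\}$ generate $\YH$ by the first presentation. For injectivity it suffices to show $\dim_{\mathbb{K}}B\le r^n\cdot n!$, which equals $\dim_{\mathbb{K}}\YH$ by \eqref{basis-yokonuam-hecke-1}. The relations of $B$ let one move every $w_l$ to the left of every $h_i$: directly when $l\ne i,i+1$; via $h_iw_i=w_{i+1}h_i-\Delta^{-2}\sum_{c_1<c_2}(\zeta^{c_2}-\zeta^{c_1})(q-1)F_{c_1}(w_i)F_{c_2}(w_{i+1})$ when $l=i$; and via $h_iw_{i+1}=w_ih_i+\Delta^{-2}\sum_{c_1<c_2}(\zeta^{c_2}-\zeta^{c_1})(q-1)F_{c_1}(w_i)F_{c_2}(w_{i+1})$ (obtained by subtracting the previous identity from $h_i(w_i+w_{i+1})=(w_i+w_{i+1})h_i$) when $l=i+1$; in each case the error terms are polynomials in the $w$'s involving strictly fewer $h$'s. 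Induction on the number of $h$'s then shows $B$ is spanned by the elements $w_1^{a_1}\cdots w_n^{a_n}\cdot(\text{word in the }h_i)$ with $0\le a_j\le r-1$, and the words in the $h_i$ reduce to a set indexed by $\mS_n$ by the braid relations together with $h_i^2=q+(q-1)h_i$, exactly as for the Iwahori--Hecke algebra of $\mS_n$. Thus $\dim_{\mathbb{K}}B\le r^n\cdot n!$, and together with surjectivity this forces $\phi$ to be an isomorphism. The argument is insensitive to invertibility of $q$, so it also covers the case $q=0$ relevant here.
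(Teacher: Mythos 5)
The paper offers no proof of this proposition: it is quoted from [ER, Theorem 7] (which in turn rests on Shoji's construction of the modified Ariki--Koike algebra), so there is no internal argument to compare against. Your strategy is essentially the one carried out in [ER]: the corrected generators $h_i=g_i+\delta_i$ with $\delta_i=(q-1)\sum_{c_1<c_2}E^{(i)}_{c_1,c_2}$ are exactly the elements realizing the third presentation inside $Y_{r,n}(q)$, and your verifications of the quadratic relation (via $(q-1)e_i+\delta_i+\delta_i'=(q-1)\cdot 1$ and $\delta_i^2=(q-1)\delta_i$), of the twisted commutation relation (via $(t_i-t_{i+1})\delta_i$), of the easy commutation relations, and the surjectivity-plus-dimension-count at the end are all correct and complete.

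The genuine gap is the braid relation $h_ih_{i+1}h_i=h_{i+1}h_ih_{i+1}$. This is the entire technical content of the theorem --- it is why [ER, Theorem 7] (and behind it [S], [SS]) is a substantive result rather than a formal change of generators --- and you only announce a plan: insert $1=\sum_{\chi}E_{\chi}$ and check on each $\mathfrak{S}_3$-orbit of the triple $(\chi(t_i),\chi(t_{i+1}),\chi(t_{i+2}))$. The plan is the right one and does reduce the claim to finitely many cases (all three values equal; exactly two equal, in three placements; all distinct, with its $3!$ orderings), but none of these cases is computed, and the all-distinct case is a nontrivial Yang--Baxter-type identity, not a one-line cancellation. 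Until that relation is verified, $\phi$ is not known to be a well-defined homomorphism, and the surjectivity and dimension-count arguments have nothing to apply to. A smaller loose end: the identification $\Delta^{-2}F_{c_1}(w_i)F_{c_2}(w_{i+1})=E^{(i)}_{c_1,c_2}$, i.e.\ $\Delta^{-1}F_c=P_c$ in the notation of Lemma 2.3, is asserted by appeal to [S] but depends on the precise normalizations of $F_c$ and $\Delta$ there; without checking it, the scalar and the sign in the twisted relation are not actually pinned down.
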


For each $w\in \mathfrak{S}_{n},$ let $w=s_{i_1}\cdots s_{i_{r}}$ be a reduced expression of $w.$ By Matsumoto's lemma again, the element $h_{w} :=h_{i_1}h_{i_2}\cdots h_{i_{r}}$ is well-defined. Moreover, It is known by [S] that the following set
\begin{align}\label{basis-yokonuam-hecke-3}
\{w_{1}^{k_{1}}\cdots w_{n}^{k_{n}}h_{w}\:|\:1\leq k_{1},\ldots,k_{n}\leq r\text{ and }w\in\mathfrak{S}_{n}\}
\end{align}
gives rise to a $\mathbb{K}$-basis of $Y_{r,n}(q).$

For each $\chi\in \text{Irr}(\mathcal{T})$, since $\chi$ is completely determined by the values $(\chi(t_{1}),\ldots,\chi(t_{n}))$ with each $\chi(t_{k})=\zeta^{c_{k}}$ for $1\leq k\leq n$ and $1\leq c_{k}\leq r.$ Thus, we can identify each $\chi$ with $\mathrm{c}=(c_1,\ldots,c_{n})\in C^{n}$ such that $\chi(t_{k})=\zeta^{c_{k}}$ for $1\leq k\leq n$, where $C=\{1,\ldots,r\}.$

For each $1\leq k\leq r,$ we denote by $P_{k}(X)$ the Lagrange polynomial
\[P_{k}(X)=\prod_{1\leq l\leq r; l\neq k}\frac{X-\zeta^{l}}{\zeta^{k}-\zeta^{l}}.\]

The following lemma can be easily proved by definition.
\begin{lemma}
If we identify $\chi\in \mathrm{Irr}(\mathcal{T})$ with $\mathrm{c}\in C^{n}.$ Then we have
\[E_{\chi}=P_{c_{1}}(t_{1})\cdots P_{c_{n}}(t_{n}).\]
\end{lemma}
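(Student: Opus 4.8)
The plan is to unwind the definitions. Recall that $E_{\chi}$ is the primitive idempotent of the commutative subalgebra $\mathcal{T}$ attached to the character $\chi$, characterized by $\chi'(E_\chi)=\delta_{\chi',\chi}$ for all $\chi'\in\mathrm{Irr}(\mathcal{T})$. Since $\mathcal{T}$ is the group algebra of $(\mathbb{Z}/r\mathbb{Z})^n$ over $\mathbb{K}$, and $p\nmid r$, this algebra is split semisimple, so such a primitive idempotent exists and is unique; hence it suffices to verify that the proposed element $P_{c_1}(t_1)\cdots P_{c_n}(t_n)$ lies in $\mathcal{T}$ and takes the correct values under every character $\chi'$.

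First I would observe that each $P_k(t_i)$ is a polynomial in $t_i$, hence an element of $\mathcal{T}$, so the product $P_{c_1}(t_1)\cdots P_{c_n}(t_n)$ is indeed in $\mathcal{T}$ (the $t_i$ commute). Next, fix an arbitrary $\chi'\in\mathrm{Irr}(\mathcal{T})$, identified with $\mathrm{c}'=(c_1',\ldots,c_n')\in C^n$, so that $\chi'(t_i)=\zeta^{c_i'}$. Applying $\chi'$, which is an algebra homomorphism, gives
\[
\chi'\big(P_{c_1}(t_1)\cdots P_{c_n}(t_n)\big)=\prod_{i=1}^{n}P_{c_i}(\chi'(t_i))=\prod_{i=1}^{n}P_{c_i}(\zeta^{c_i'}).
\]
Now the defining property of the Lagrange polynomial is precisely that $P_k(\zeta^l)=\delta_{k,l}$ for $1\le k,l\le r$ (the roots $\zeta^1,\ldots,\zeta^r$ are distinct because $p\nmid r$, so the denominators $\zeta^k-\zeta^l$ are nonzero and $P_k$ is well-defined). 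Therefore each factor $P_{c_i}(\zeta^{c_i'})$ equals $\delta_{c_i,c_i'}$, and the product equals $\prod_i\delta_{c_i,c_i'}=\delta_{\mathrm{c},\mathrm{c}'}=\delta_{\chi',\chi}$. This is exactly the characterizing condition for $E_\chi$, so $E_\chi=P_{c_1}(t_1)\cdots P_{c_n}(t_n)$ by uniqueness of primitive idempotents in the split semisimple algebra $\mathcal{T}$.

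There is essentially no obstacle here: the only point that needs care is the standing hypothesis $p\nmid r$, which is what guarantees that $\zeta$ is a primitive $r$-th root of unity in $\mathbb{K}$ with $r$ distinct powers, so that both the Lagrange interpolation identity $P_k(\zeta^l)=\delta_{k,l}$ holds and the characters of $\mathcal{T}$ are in bijection with $C^n$ as asserted. Alternatively, one could avoid invoking uniqueness and instead match the proposed formula directly against the explicit expression \eqref{idempotents} for $E_\chi$: both $\frac{1}{r}\sum_{s}\chi(t_i)^s t_i^{-s}$ and $P_{c_i}(t_i)$ are the unique element of $\mathbb{K}[t_i]/(t_i^r-1)$ that is $1$ on $\zeta^{c_i}$ and $0$ on the other powers of $\zeta$, so they agree factor by factor. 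Either route is a short computation.
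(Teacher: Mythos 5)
Your proof is correct and is exactly the definitional verification the paper has in mind (the paper only remarks that the lemma ``can be easily proved by definition'' and gives no further detail). The key points—$P_k(\zeta^l)=\delta_{k,l}$, the characterization $\chi'(E_\chi)=\delta_{\chi',\chi}$, and the fact that $p\nmid r$ makes $\mathcal{T}$ split semisimple so characters determine elements—are all correctly identified and suffice.
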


For each $\mathrm{c}=(c_1,\ldots,c_{n})\in C^{n}$, we define
\[L_{\mathrm{c}} :=P_{c_{1}}(w_{1})\cdots P_{c_{n}}(w_{n}).\]
The following proposition can be easily obtained by using [HNT, Lemma 4.1] and Proposition \ref{propo-presenta-3}.
\begin{proposition}\label{propo-presenta-4}
$Y_{r,n}(q)$ has a forth presentation, which is generated by $h_{1},\ldots,h_{n-1}$ and $L_{\mathrm{c}}$ $(\mathrm{c}\in C^{n})$ with relations$:$
\begin{align*}
\sum_{\mathrm{c}\in C^{n}}L_{\mathrm{c}}&=1; \\[0.1em]
L_{\mathrm{c}'}L_{\mathrm{c}}&=\delta_{\mathrm{c}', \mathrm{c}}L_{\mathrm{c}};\\[0.1em]
h_{i}h_{j}&=h_{j}h_{i}\qquad \quad\qquad\qquad\hspace{2.1mm}\mbox{for all $1\leq i, j\leq n-1$ such that $\vert i-j\vert > 1;$}\\[0.1em]
h_ih_{i+1}h_i&=h_{i+1}h_ih_{i+1}  \qquad \quad\quad \hspace{3mm} \mbox{for all $1\leq i\leq n-2;$}\\[0.1em]
h_{i}^{2}&=q+(q-1)h_{i} \qquad \qquad \hspace{-0.5mm}\mbox{for all $1\leq i\leq n-1;$}\\[0.1em]
h_{i}L_{\mathrm{c}}&=L_{s_{i}(\mathrm{c})}h_{i}-(q-1)
\begin{cases}
-L_{\mathrm{c}} & \text{if } c_{i}< c_{i+1},
\\
~~~0 & \text{if } c_{i}=c_{i+1},\\
L_{s_{i}(\mathrm{c})}& \text{if } c_{i}> c_{i+1},
\end{cases}
\end{align*}
where $s_{i}$ acts on $\mathrm{c}$ by exchanging $c_{i}$ and $c_{i+1}.$
\end{proposition}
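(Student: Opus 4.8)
The plan is to prove that the presentation in the statement is equivalent to the third presentation of Proposition~\ref{propo-presenta-3}. Write $A$ for the $\mathbb{K}$-algebra defined by the presentation in the statement. I would first define an algebra homomorphism $\varphi\colon A\to Y_{r,n}(q)$ by $h_{i}\mapsto h_{i}$ and $L_{\mathrm{c}}\mapsto P_{c_{1}}(w_{1})\cdots P_{c_{n}}(w_{n})$, in terms of the generators $h_{i},w_{i}$ of the third presentation, and check that the defining relations of $A$ hold in $Y_{r,n}(q)$. The braid relations for the $h_{i}$ and the quadratic relation $h_{i}^{2}=q+(q-1)h_{i}$ are literally among the relations of Proposition~\ref{propo-presenta-3}, so nothing is needed there. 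The two idempotent relations $\sum_{\mathrm{c}}L_{\mathrm{c}}=1$ and $L_{\mathrm{c}'}L_{\mathrm{c}}=\delta_{\mathrm{c}',\mathrm{c}}L_{\mathrm{c}}$ follow by evaluating, at the pairwise commuting elements $w_{1},\ldots,w_{n}$, the Lagrange interpolation identities $\sum_{k=1}^{r}P_{k}(X)=1$ (the difference of the two sides has degree $<r$ and vanishes at the $r$ distinct points $\zeta^{1},\ldots,\zeta^{r}$, which are distinct since $p\nmid r$) and $P_{j}(X)P_{k}(X)\equiv\delta_{jk}P_{k}(X)\pmod{X^{r}-1}$ (the left-hand minus the right-hand side vanishes at every $\zeta^{l}$, hence is divisible by $\prod_{l}(X-\zeta^{l})=X^{r}-1$), together with $w_{i}^{r}=1$.

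The only substantial relation is the last one. Using $h_{i}w_{l}=w_{l}h_{i}$ for $l\neq i,i+1$, it reduces to the identity
\[
h_{i}L_{\mathrm{c}}=\Big(\prod_{l\neq i,i+1}P_{c_{l}}(w_{l})\Big)\,h_{i}\,P_{c_{i}}(w_{i})\,P_{c_{i+1}}(w_{i+1}),
\]
so it suffices to compute $h_{i}\,P_{c_{i}}(w_{i})\,P_{c_{i+1}}(w_{i+1})$ inside the subalgebra generated by $h_{i},w_{i},w_{i+1}$, by pushing $h_{i}$ to the right past the polynomials in $w_{i},w_{i+1}$ via the relations $h_{i}w_{i}=w_{i+1}h_{i}-\Delta^{-2}\sum_{c_{1}<c_{2}}(\zeta^{c_{2}}-\zeta^{c_{1}})(q-1)F_{c_{1}}(w_{i})F_{c_{2}}(w_{i+1})$ and $h_{i}(w_{i}+w_{i+1})=(w_{i}+w_{i+1})h_{i}$. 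This is exactly the computation performed in [HNT, Lemma~4.1] (under the identification of $Y_{r,n}(q)$ with the modified Ariki--Koike algebra from Proposition~\ref{propo-presenta-3}), and rewriting its correction terms in terms of the idempotents $L_{\mathrm{c}}$ rather than the products $F_{c_{1}}(w_{i})F_{c_{2}}(w_{i+1})$ yields precisely $L_{s_{i}(\mathrm{c})}h_{i}$ plus the stated three-case correction, according as $c_{i}<c_{i+1}$, $c_{i}=c_{i+1}$, or $c_{i}>c_{i+1}$. This makes $\varphi$ a well-defined homomorphism, and it is surjective since $w_{i}=\sum_{\mathrm{c}\in C^{n}}\zeta^{c_{i}}P_{c_{1}}(w_{1})\cdots P_{c_{n}}(w_{n})$ lies in its image (Lagrange interpolation of $\zeta^{l}\mapsto\zeta^{l}$ gives $\sum_{c=1}^{r}\zeta^{c}P_{c}(w_{i})=w_{i}$ for $r\geq2$, and $r=1$ is trivial), and $h_{i},w_{i}$ generate $Y_{r,n}(q)$.

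It then remains to see that $\varphi$ is injective, for which I would use a dimension count. The last relation of $A$ gives $h_{i}L_{\mathrm{c}}\in L_{s_{i}(\mathrm{c})}h_{i}+\mathbb{K}L_{\mathrm{c}}+\mathbb{K}L_{s_{i}(\mathrm{c})}$, so in any monomial every $L_{\mathrm{c}}$ can be moved to the left of every $h_{i}$; the relations $L_{\mathrm{c}'}L_{\mathrm{c}}=\delta_{\mathrm{c}',\mathrm{c}}L_{\mathrm{c}}$ and $1=\sum_{\mathrm{c}}L_{\mathrm{c}}$ then show every monomial is a scalar multiple of some $L_{\mathrm{c}}\cdot(\text{word in the }h_{i})$; and the braid and quadratic relations, together with Matsumoto's lemma and the left-multiplication rule $h_{i}h_{w}=h_{s_{i}w}$ or $q\,h_{s_{i}w}+(q-1)h_{w}$ (according to whether $\ell(s_{i}w)>\ell(w)$ or not), show the subalgebra generated by the $h_{i}$ is spanned by $\{h_{w}\mid w\in\mathfrak{S}_{n}\}$. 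Hence $A$ is spanned by $\{L_{\mathrm{c}}h_{w}\mid\mathrm{c}\in C^{n},\,w\in\mathfrak{S}_{n}\}$, so $\dim_{\mathbb{K}}A\leq r^{n}n!=\dim_{\mathbb{K}}Y_{r,n}(q)$ by the basis~\eqref{basis-yokonuam-hecke-3}; since $\varphi$ is a surjection between finite-dimensional spaces of equal dimension, it is an isomorphism. (Alternatively one could build an explicit inverse $\psi\colon Y_{r,n}(q)\to A$ on the third presentation by $h_{i}\mapsto h_{i}$, $w_{i}\mapsto\sum_{\mathrm{c}}\zeta^{c_{i}}L_{\mathrm{c}}$; that $\psi$ respects $h_{i}w_{i}=w_{i+1}h_{i}-\Delta^{-2}\sum\cdots$ is again [HNT, Lemma~4.1] read backwards, and $\psi\circ\varphi$, $\varphi\circ\psi$ fix the generators by orthogonality of the $L_{\mathrm{c}}$ and the identities above.)

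The step I expect to be the main obstacle is exactly the last relation, i.e.\ converting the correction term $-\Delta^{-2}\sum_{c_{1}<c_{2}}(\zeta^{c_{2}}-\zeta^{c_{1}})(q-1)F_{c_{1}}(w_{i})F_{c_{2}}(w_{i+1})$ appearing in the third presentation into the clean three-case formula involving the idempotents $L_{\mathrm{c}}$. Everything else is formal manipulation with Lagrange interpolants and with the length function on $\mathfrak{S}_{n}$; the genuine content is supplied by [HNT, Lemma~4.1], which together with Proposition~\ref{propo-presenta-3} makes the result follow easily.
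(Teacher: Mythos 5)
Your proposal is correct and follows essentially the same route as the paper, which simply observes that the result "can be easily obtained by using [HNT, Lemma 4.1] and Proposition~\ref{propo-presenta-3}"; you supply exactly those two ingredients and fill in the routine verifications (Lagrange-interpolation identities for the orthogonality of the $L_{\mathrm{c}}$, and the spanning/dimension count, for which the paper instead cites [HNT, Proposition~4.2] for the basis $\{L_{\mathrm{c}}h_{w}\}$). The genuine content in both treatments is delegated to [HNT, Lemma~4.1], so your write-up is a faithful, more detailed version of the paper's argument.
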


By [HNT, Proposition 4.2], the following set
\begin{align}\label{basis-yokonuam-hecke-4}
\{L_{\mathrm{c}}h_{w}\:|\:\mathrm{c}\in C^{n}\text{ and }w\in\mathfrak{S}_{n}\}
\end{align}
also gives rise to a $\mathbb{K}$-basis of $Y_{r,n}(q).$

In this paper, we consider the particular case $Y_{r,n}(0)$ when $q=0,$ which we call $0$-Yokonuma-Hecke algebras.

\subsection{Classification of simple modules}
For each $\mathrm{c}=(c_1,\ldots,c_{n})\in C^{n},$ we define an associative set $I_{\mathrm{c}}=((I_{\mathrm{c}})_{1},\ldots,(I_{\mathrm{c}})_{k}),$ where $(I_{\mathrm{c}})_{j}=(c_{i_{j-1}+1},\ldots,c_{i_{j}})$ is such that $c_{i_{j-1}+1}=\ldots=c_{i_{j}}$ and $c_{i_{j}}\neq c_{i_{j}+1}$ for $1\leq j\leq k,$ and where $i_{0}=0$ and $i_{k}=n.$ We also define $|(I_{\mathrm{c}})_{j}|=i_{j}-i_{j-1}$ for $1\leq j\leq k.$
\begin{theorem}\label{classification-result1}
All the simple $Y_{r,n}(0)$-modules are of dimension $1.$ Moreover, they are indexed by the following set$:$
\[\big\{(\mathrm{c}, \mathrm{J})=((c_1,\ldots,c_{n}),(J_1,\ldots,J_{k}))\:|\:\mathrm{c}\in C^{n}\text{ and }J_{j}\text{ is a composition of }|(I_{\mathrm{c}})_{j}|\text{ for }1\leq j\leq k\big\}.\]
\end{theorem}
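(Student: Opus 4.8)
The plan is to carry out three steps: (i)~exhibit, for every pair $(\mathrm{c},\mathrm{J})$ in the stated index set, a one-dimensional $Y_{r,n}(0)$-module $D_{(\mathrm{c},\mathrm{J})}$; (ii)~see that these are pairwise non-isomorphic; (iii)~show that every simple $Y_{r,n}(0)$-module is isomorphic to one of them. Throughout I would use the fourth presentation (Proposition~\ref{propo-presenta-4}) specialized at $q=0$: the $L_{\mathrm{c}}$ are pairwise orthogonal idempotents summing to $1$, the $h_{i}$ satisfy the type $A$ braid and commutation relations together with $h_{i}^{2}=-h_{i}$, and $h_{i}L_{\mathrm{c}}$ equals $L_{s_{i}(\mathrm{c})}h_{i}$, resp. $L_{s_{i}(\mathrm{c})}h_{i}-L_{\mathrm{c}}$, resp. $L_{s_{i}(\mathrm{c})}(h_{i}+1)$, according as $c_{i}=c_{i+1}$, $c_{i}<c_{i+1}$, $c_{i}>c_{i+1}$.

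For (i), given $(\mathrm{c}_{0},\mathrm{J})$ put $D_{(\mathrm{c}_{0},\mathrm{J})}=\mathbb{K}v$ with $L_{\mathrm{c}}v=\delta_{\mathrm{c},\mathrm{c}_{0}}v$ and $h_{i}v=\beta_{i}v$, where $\beta_{i}=-1$ if $i$ is an ascent of $\mathrm{c}_{0}$ (that is, $(\mathrm{c}_{0})_{i}<(\mathrm{c}_{0})_{i+1}$), $\beta_{i}=0$ if $i$ is a descent ($(\mathrm{c}_{0})_{i}>(\mathrm{c}_{0})_{i+1}$), and, when $i$ and $i+1$ both lie in the same block $(I_{\mathrm{c}_{0}})_{j}$, $\beta_{i}\in\{0,-1\}$ is read off from $J_{j}$ via the usual bijection between compositions of $|(I_{\mathrm{c}_{0}})_{j}|$ and subsets of $\{1,\dots,|(I_{\mathrm{c}_{0}})_{j}|-1\}$, normalized so that Norton's simple $\mathcal{H}_{0}(\mathfrak{S}_{m})$-modules get indexed by compositions of $m$. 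Checking the relations is routine: $h_{i}^{2}=-h_{i}$ holds since $\beta_{i}\in\{0,-1\}$; the braid relation holds since $\beta_{i}^{2}\beta_{i+1}=\beta_{i+1}^{2}\beta_{i}$ whenever $\beta_{i},\beta_{i+1}\in\{0,-1\}$; and the mixed relations reduce to a short case check according to whether $\mathrm{c}\in\{\mathrm{c}_{0},s_{i}(\mathrm{c}_{0})\}$, using exactly the prescribed values of $\beta_{i}$. For (ii), the idempotents $e_{\mathcal{O}}=\sum_{\mathrm{c}\in\mathcal{O}}L_{\mathrm{c}}$ attached to the $\mathfrak{S}_{n}$-orbits $\mathcal{O}\subseteq C^{n}$ are central (the correction terms of the mixed relations cancel in pairs $\{\mathrm{c},s_{i}(\mathrm{c})\}$ after summing over $\mathcal{O}$), so $D_{(\mathrm{c}_{0},\mathrm{J})}$ lives on a single block $e_{\mathcal{O}}$ and remembers the one character $\mathrm{c}_{0}$ on which $L_{\mathrm{c}_{0}}$ acts nontrivially, while the scalars $\beta_{i}$ recover $\mathrm{J}$; distinct pairs therefore give non-isomorphic modules.

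Step (iii) is the core. Let $M$ be simple. Since $\sum_{\mathrm{c}}L_{\mathrm{c}}=1$, there is $\mathrm{c}$ with $L_{\mathrm{c}}M\neq 0$; then $L_{\mathrm{c}}M$ is a simple module over the corner algebra $L_{\mathrm{c}}Y_{r,n}(0)L_{\mathrm{c}}$, and $M\mapsto L_{\mathrm{c}}M$ is a bijection between isomorphism classes of simple $Y_{r,n}(0)$-modules not annihilated by $L_{\mathrm{c}}$ and isomorphism classes of simple $L_{\mathrm{c}}Y_{r,n}(0)L_{\mathrm{c}}$-modules. The key lemma I would establish is
\[L_{\mathrm{c}}\,Y_{r,n}(0)\,L_{\mathrm{c}}\;\cong\;\mathcal{H}_{0}(\mathfrak{S}_{\mathrm{c}}),\qquad \mathfrak{S}_{\mathrm{c}}:=\mathrm{Stab}_{\mathfrak{S}_{n}}(\mathrm{c})=\prod_{j}\mathfrak{S}_{|(I_{\mathrm{c}})_{j}|}.\]
Granting it, Norton's theorem [No] makes $L_{\mathrm{c}}M$ one-dimensional, hence equal to the one-dimensional $\mathcal{H}_{0}(\mathfrak{S}_{\mathrm{c}})$-module attached to some tuple $\mathrm{J}$ of compositions of the block sizes of $\mathrm{c}$; but $L_{\mathrm{c}}D_{(\mathrm{c},\mathrm{J})}$ is that same module, so the bijection forces $M\cong D_{(\mathrm{c},\mathrm{J})}$. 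As a by-product, summing the equalities $\#\{\text{simple }\mathcal{H}_{0}(\mathfrak{S}_{\mathrm{c}})\text{-mod}\}=\#\{\text{simple }M:L_{\mathrm{c}}M\neq0\}$ over $\mathrm{c}$ gives $\sum_{M}|\mathrm{supp}(M)|=\#\{\text{pairs }(\mathrm{c},\mathrm{J})\}$, which together with $|\mathrm{supp}(M)|\geq1$ and Step (i) shows that every simple module has singleton support and that there are no further simples.

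Finally, for the key lemma: reducing the presentation of $Y_{r,n}(0)$ modulo the ideal generated by $\{L_{\mathrm{c}'}:\mathrm{c}'\neq\mathrm{c}\}$ makes $h_{i}$ act as $0$ at descents of $\mathrm{c}$ and as $-1$ at ascents, while leaving the block-internal $h_{i}$ as free generators with only the $0$-Hecke relations; a direct check that no further relations survive identifies this quotient with $\mathcal{H}_{0}(\mathfrak{S}_{\mathrm{c}})$, of dimension $|\mathfrak{S}_{\mathrm{c}}|$, and yields a surjection $L_{\mathrm{c}}Y_{r,n}(0)L_{\mathrm{c}}\twoheadrightarrow\mathcal{H}_{0}(\mathfrak{S}_{\mathrm{c}})$. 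It then suffices to prove the reverse inequality $\dim L_{\mathrm{c}}Y_{r,n}(0)L_{\mathrm{c}}\leq|\mathfrak{S}_{\mathrm{c}}|$, equivalently that $L_{\mathrm{c}}h_{w}L_{\mathrm{c}}$ lies in the span of $\{L_{\mathrm{c}}h_{u}:u\in\mathfrak{S}_{\mathrm{c}}\}$ for every $w\in\mathfrak{S}_{n}$; I would obtain this by induction on $\ell(w)$, writing $w=w's_{i}$ reduced and moving the right-hand $L_{\mathrm{c}}$ past $h_{i}$ by the mixed relations, the correction terms producing shorter corner elements and the main term $L_{\mathrm{c}}h_{w'}L_{s_{i}(\mathrm{c})}h_{i}$ being controlled by a secondary induction tracking which intermediate characters can be reached on the way back to $\mathrm{c}$. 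This normal-form/straightening computation — essentially the basis computation for the $0$-Ariki-Koike-Shoji algebra — is what I expect to be the main technical obstacle. Alternatively, Step (iii) in its entirety can be deduced from the isomorphism of $Y_{r,n}(0)$ with the $0$-Ariki-Koike-Shoji algebra (the $q=0$ case of [ER]) together with the classification of the simple modules of the latter in [HNT], after matching up the two index sets.
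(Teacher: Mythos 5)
Your overall architecture --- explicit one-dimensional modules plus a corner-algebra reduction at each idempotent $L_{\mathrm{c}}$ --- is genuinely different from the paper's argument, which never localizes at an idempotent: the paper shows that the two-sided ideal $J$ generated by the commutators $[g_i,g_j]$ and $[g_i,t_j]$ is nilpotent (via identities such as $(g_ig_{i+1}-g_{i+1}g_i)^3=0$ and $(g_it_i-t_ig_i)^2=0$), that $Y_{r,n}(0)/J$ is a reduced commutative algebra and hence semisimple, so that $J=\mathrm{rad}(Y_{r,n}(0))$ and the simple modules are exactly the characters of the commutative quotient, enumerated by the pairs $(\mathrm{c},\mathrm{J})$. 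Your steps (i) and (ii) are correct and produce precisely those characters. But step (iii) contains a genuine error: the key lemma is false. The identification $\mathrm{Stab}_{\mathfrak{S}_n}(\mathrm{c})=\prod_j\mathfrak{S}_{|(I_{\mathrm{c}})_j|}$ holds only when equal entries of $\mathrm{c}$ occupy consecutive positions; in general the stabilizer is the product of symmetric groups on the \emph{fibres} of $\mathrm{c}$, strictly larger than the Young subgroup generated by the $s_i$ with $c_i=c_{i+1}$. Moreover $\dim L_{\mathrm{c}}Y_{r,n}(0)L_{\mathrm{c}}=|\mathrm{Stab}_{\mathfrak{S}_n}(\mathrm{c})|$ (the triangularity $h_wL_{\mathrm{c}}=L_{w(\mathrm{c})}h_w+\text{lower terms}$ gives $\geq$ for each $\mathrm{c}$, and summing over $\mathrm{c}$ forces equality), so your claimed bound $\dim L_{\mathrm{c}}Y_{r,n}(0)L_{\mathrm{c}}\leq\prod_j|(I_{\mathrm{c}})_j|!$ fails. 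Concretely, take $r=2$, $n=3$, $\mathrm{c}=(1,2,1)$: the corner algebra is two-dimensional, and working with the idempotents $E_{\chi}$ one finds $g_{(13)}^2=g_1g_2g_1^2g_2g_1=-g_1g_2e_1g_1g_2g_1=-e_{2,3}g_1g_2g_1g_2g_1$, while $E_{\chi}e_{2,3}=0$ because $\chi(t_2)\neq\chi(t_3)$; hence $(E_{\chi}g_{(13)})^2=0$ and the corner algebra is $\mathbb{K}[x]/(x^2)$, a local algebra not isomorphic to $\mathcal{H}_0(\mathfrak{S}_1\times\mathfrak{S}_1\times\mathfrak{S}_1)=\mathbb{K}$.

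What survives is that the corner algebra still has the correct number of (one-dimensional) simple modules --- one, in the example --- but establishing this requires identifying the radical of $L_{\mathrm{c}}Y_{r,n}(0)L_{\mathrm{c}}$, which is essentially the difficulty the corner-algebra reduction was meant to bypass; as written, the concluding count in step (iii) rests on a statement that is false as an isomorphism and unproved as a count. The most direct repair is to prove first that the commutator ideal equals the radical (the paper's route), after which the semisimple quotient of each corner is visibly $\mathbb{K}^{2^{n-k}}$ and the corner detour becomes unnecessary. Your closing alternative --- invoking the $q=0$ case of the [ER] isomorphism together with the [HNT] classification --- is logically sound but amounts to citing the result the paper sets out to reprove independently.
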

\begin{proof}
By definition, we can easily check that the following equalities hold.

(1) $(g_{i}g_{i+1}-g_{i+1}g_{i})^{3}=0;$

(2) $(g_{i}t_{i}-t_{i}g_{i})^{2}=(g_{i}t_{i+1}-t_{i+1}g_{i})^{2}=0.$

Using \eqref{relations-cude}, $(1)$ and $(2)$, it involves making a lengthy bur routine calculation to get that the commutators $[g_{i}, g_{i+1}]$ and $[g_{i}, t_{i}]$ are strongly nilpotent elements. Let $J$ denote the two-sided ideal generated by all the commutators $[g_{i}, g_{j}]$ and $[g_{i}, t_{j}].$ Thus, we have $J\subseteq \mathrm{rad}(Y_{r,n}(0)).$ Let $\overline{Y}_{r,n}(0)=Y_{r,n}(0)/J.$ Then $\overline{Y}_{r,n}(0)$ is a commutative algebra generated by $\overline{t}_{1},\ldots,\overline{t}_{n}$ and $\overline{g}_{1},\ldots,\overline{g}_{n-1}$ with relations:
\begin{align*}
\overline{t}_{i}^{r}=1,\quad \overline{t}_{i}\overline{t}_{j}=\overline{t}_{j}\overline{t}_{i},\quad \overline{g}_{i}\overline{t}_{j}=\overline{t}_{j}\overline{g}_{i},\quad \overline{g}_{i}\overline{t}_{i}=\overline{t}_{i+1}\overline{g}_{i},\quad  \overline{g}_{i}\overline{g}_{j}=\overline{g}_{j}\overline{g}_{i}\quad\text{ and }\quad \overline{g}_{i}^{2}=-\overline{g}_{i}.
\end{align*}
It is easy to see that $\overline{Y}_{r,n}(0)$ has no nilpotent elements. So $\overline{Y}_{r,n}(0)$ is semisimple and $J\supseteq \mathrm{rad}(Y_{r,n}(0)).$ Therefore, $J=\mathrm{rad}(Y_{r,n}(0)),$ and $\overline{Y}_{r,n}(0)=Y_{r,n}(0)/\mathrm{rad}(Y_{r,n}(0)).$

Since $\overline{Y}_{r,n}(0)$ is commutative, all the simple $Y_{r,n}(0)$-modules are of dimension $1,$ and moreover, they are indexed by the set $(\mathrm{c}, \mathrm{J}).$ In fact, for each $(\mathrm{c}, \mathrm{J})$ such that $\mathrm{c}\in C^{n}\text{ and }J_{j}\text{ is a composition of }|(I_{\mathrm{c}})_{j}|\text{ for }1\leq j\leq k,$ let $D(J_{i})$ be the associated subset of $[1, |(I_{\mathrm{c}})_{j}|-1].$ Then the associated irreducible representation $\varphi_{(\mathrm{c}, \mathrm{J})}$ of $Y_{r,n}(0)$ is defined by
\begin{align*}
\varphi_{(\mathrm{c}, \mathrm{J})}(t_{i})=\zeta^{c_{i}}\quad \text{ and }\quad \varphi_{(\mathrm{c}, \mathrm{J})}(g_{k})=\begin{cases}
-1 & \text{if }k\in \sum\limits_{1\leq l\leq j-1}|(I_{\mathrm{c}})_{l}|+D(J_{j}) \text{ for some }j,
\\
0 & \text{otherwise.}
\end{cases}
\end{align*}
\end{proof}

\subsection{$Y_{r,n}(0)$ is Frobenius}
Recall that an algebra $A$ over $\mathbb{K}$ is called Frobenius if there is a linear map $\tau :A\rightarrow \mathbb{K}$ whose kernel contains no non-zero left or right ideal of $A.$ The following proposition claims that $Y_{r,n}(0)$ is Frobenius, and is in particular self-injective.
\begin{proposition}\label{frobenius-algebras }
$Y_{r,n}(0)$ is a Frobenius algebra.
\end{proposition}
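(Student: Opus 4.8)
The plan is to write down an explicit Frobenius form on $Y_{r,n}(0)$ using the basis \eqref{basis-yokonuam-hecke-1}, rather than to unravel the algebra structure. (One should resist the temptation to reduce to known $0$-Hecke results block by block: in contrast with the invertible-parameter case, $Y_{r,n}(0)$ is \emph{not} a direct sum of matrix algebras over $0$-Hecke algebras — already for $r=n=2$ one of its three blocks is the radical-square-zero algebra of the $2$-cycle quiver.)

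Let $\mathcal{T}\subseteq Y_{r,n}(0)$ be the subalgebra generated by $t_1,\dots,t_n$, so that $\mathcal{T}\cong\mathbb{K}[(\mathbb{Z}/r\mathbb{Z})^n]$, and let $w_0\in\mathfrak{S}_n$ be the longest element. By \eqref{basis-yokonuam-hecke-1}, each $a\in Y_{r,n}(0)$ is uniquely $a=\sum_{w\in\mathfrak{S}_n}b_w g_w$ with $b_w\in\mathcal{T}$. I fix a Frobenius form $\tau_{\mathcal{T}}\colon\mathcal{T}\to\mathbb{K}$ for the group algebra $\mathcal{T}$ (for instance, the coefficient of the identity in the basis of group elements, which is nondegenerate over any field), and define $\tau(a):=\tau_{\mathcal{T}}(b_{w_0})$; equivalently, in the basis \eqref{basis-yokonuam-hecke-2}, $\tau\big(\sum_{\chi,w}d_{\chi,w}E_\chi g_w\big)$ is proportional to $\sum_{\chi}d_{\chi,w_0}$. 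It then suffices to prove that the pairing $(a,b)\mapsto\tau(ab)$ on $Y_{r,n}(0)$ is nondegenerate.

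The structural input I will need is a length filtration of multiplication: by induction on $\ell(v)$, using the relations $g_ig_j=g_jg_i$, $g_ig_{i+1}g_i=g_{i+1}g_ig_{i+1}$, $g_i^2=-e_ig_i$ and the identity $g_we_{a,b}=e_{w(a),w(b)}g_w$ implied by \eqref{relations-cude}, one gets $g_wg_v\in\bigoplus_{\ell(u)\le \ell(w)+\ell(v)}\mathcal{T}g_u$, and the $g_u$-component with $\ell(u)=\ell(w)+\ell(v)$ vanishes unless $u=wv$, in which case $\ell(wv)=\ell(w)+\ell(v)$ and that component equals $g_{wv}$. Granting this, let $0\ne a=\sum_w b_w g_w$ and let $w_1$ have maximal length among the $w$ with $b_w\ne 0$; set $v:=w_1^{-1}w_0$. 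Since $\ell(uw_0)=\ell(w_0)-\ell(u)$ for all $u$, we get $\ell(w_1)+\ell(v)=\ell(w_0)$, hence $g_{w_1}g_v=g_{w_0}$; and for every other $w$ occurring in $a$ we have $\ell(w)+\ell(v)\le\ell(w_0)$, with equality forcing $wv=w_0$, i.e.\ $w=w_1$, a contradiction — so by the filtration $g_wg_v$ has no $g_{w_0}$-component. Therefore the $g_{w_0}$-component of $a\,g_v$ is precisely $b_{w_1}\in\mathcal{T}\setminus\{0\}$. Finally, for $b\in\mathcal{T}$ the $g_{w_0}$-component of $a\,g_v\,b$ is $b_{w_1}\,\sigma(b)$, where $\sigma$ is the automorphism of $\mathcal{T}$ determined by $g_{w_0}b=\sigma(b)g_{w_0}$; as $\tau_{\mathcal{T}}$ is nondegenerate and $b_{w_1}\ne 0$, there is $c\in\mathcal{T}$ with $\tau_{\mathcal{T}}(b_{w_1}c)\ne 0$, and then $z:=g_v\,\sigma^{-1}(c)$ satisfies $\tau(az)\ne 0$. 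This gives right-nondegeneracy, hence nondegeneracy since $\dim_{\mathbb{K}}Y_{r,n}(0)<\infty$, so $\tau$ is a Frobenius form.

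The one step that needs genuine care is the length filtration of $g_wg_v$, and especially the uniqueness of its top-length term; the induction is routine but must be run with the $q=0$ quadratic relation, the point being that at $q=0$ the elements $g_w$ are no longer invertible. Everything else is formal manipulation together with the elementary fact that a finite abelian group algebra is a Frobenius algebra.
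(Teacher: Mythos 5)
Your proof is correct, and at the level of strategy it is the same as the paper's: a linear functional supported on the top cell $\mathcal{T}g_{w_0}$, combined with the fact that in a product $g_{w}g_{v}$ the unique term of length $\ell(w)+\ell(v)$ survives exactly when the lengths add, in which case it is $g_{wv}$ with trivial $\mathcal{T}$-part (this is just the iteration of the paper's own rule $g_{i}g_{w}=g_{s_{i}w}$ or $-e_{i}g_{w}$, so your filtration lemma is as routine as you claim). The substantive difference is your choice of functional on the $\mathcal{T}$-component of the $g_{w_0}$-cell, and it is not cosmetic. The paper sets $\tau(t_{1}^{a_{1}}\cdots t_{n}^{a_{n}}g_{w_{0}})=1$ for \emph{every} monomial, i.e.\ it uses the augmentation of $\mathcal{T}$ in that slot, and for $r\geq 2$ the resulting form is degenerate: for any non-trivial character $\chi$ the line $\mathbb{K}E_{\chi}g_{w_{0}}$ is a two-sided ideal of $Y_{r,n}(0)$ (one checks $t_{j}E_{\chi}g_{w_{0}}=\chi(t_{j})E_{\chi}g_{w_{0}}$ and $g_{i}E_{\chi}g_{w_{0}}=-E_{s_{i}(\chi)}e_{i}g_{w_{0}}\in\mathbb{K}E_{\chi}g_{w_{0}}$, and similarly on the right), while the sum of the coefficients of $E_{\chi}$ is $\prod_{i}\frac{1}{r}\sum_{s}\chi(t_{i})^{s}=0$, so this ideal lies in the kernel of the paper's $\tau$. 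Your coefficient-of-identity form instead gives $\tau(E_{\chi}g_{w_{0}})=r^{-n}\neq 0$ for all $\chi$ and avoids this. You also repair a second gap: after multiplying by $g_{w_{1}^{-1}w_{0}}$ one only knows that the $g_{w_{0}}$-component equals some nonzero $b_{w_{1}}\in\mathcal{T}$, and the extra right factor $\sigma^{-1}(c)\in\mathcal{T}$ is genuinely needed to detect it (the paper multiplies only by $g_{w^{-1}w_{0}}$, which already fails when $b_{w_{1}}=t_{1}-t_{2}$, say). The final passage from injectivity of $a\mapsto\tau(a\,\cdot)$ to two-sided nondegeneracy by finite-dimensionality is standard and correct. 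In short, your argument is a corrected version of the paper's proof rather than a mere variant of it.
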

\begin{proof}
We define a linear map $\tau :Y_{r,n}(0)\rightarrow \mathbb{K}$ by
\begin{align}\label{frobenius-forms-cude}
\tau(t_{1}^{a_{1}}\cdots t_{n}^{a_{n}}g_{w})=\begin{cases}
1 & \text{if }w=w_{0},
\\
0 & \text{otherwise.}
\end{cases}
\end{align}
where $w_{0}$ is the longest element of $\mathfrak{S}_{n}.$ We then claim that for any $0\neq h\in Y_{r,n}(0)$, there exist elements $j$ and $k$ such that $\tau(jh)$ and $\tau(hk)$ are non-zero. By \eqref{basis-yokonuam-hecke-1}, we can write $h$ as a linear combination of some elements $g_{y}t_{1}^{b_{1}}\cdots t_{n}^{b_{n}}$, and assume that $w$ is an element of maximal length such that $g_{w}t_{1}^{c_{1}}\cdots t_{n}^{c_{n}}$ occurs with non-zero coefficient. Let $j=g_{w_{0}w^{-1}}$ and $k=g_{w^{-1}w_{0}}.$

Since we have
\begin{align*}
g_{i}g_{w}=\begin{cases}
g_{s_{i}w} & \text{if }\ell(s_{i}w)> \ell(w),
\\
-e_{i}g_{w} & \text{if }\ell(s_{i}w)< \ell(w),
\end{cases}
\end{align*}
we notice that for any $x, y\in \mathfrak{S}_{n},$ $g_{x}g_{y}$ is of the form $tg_{z}$ such that $\ell(z)\leq \ell(x)+\ell(y),$ and $\ell(z)=\ell(x)+\ell(y)$ if and only if $\ell(xy)=\ell(x)+\ell(y)$, in which case $z=xy$ and $t=1.$ Thus, we get that $j\cdot g_{w}t_{1}^{c_{1}}\cdots t_{n}^{c_{n}}=g_{w_{0}}t_{1}^{c_{1}}\cdots t_{n}^{c_{n}}$ and $g_{w}t_{1}^{c_{1}}\cdots t_{n}^{c_{n}}\cdot k=g_{w_{0}}t_{1}^{c_{1}'}\cdots t_{n}^{c_{n}'}$, while $\tau(j\cdot g_{x}t_{1}^{b_{1}}\cdots t_{n}^{b_{n}})=0=\tau(g_{x}t_{1}^{b_{1}}\cdots t_{n}^{b_{n}}\cdot k)$ for any $x\neq w$ with $\ell(x)\leq \ell(w).$
\end{proof}

There is an involution $\phi$ on $Y_{r,n}(0)$ defined by $\phi(g_{i})=g_{n-i}$ for $1\leq i\leq n-1$ and $\phi(t_{j})=t_{n+1-j}$ for $1\leq j\leq n.$ Then we have the following result.
\begin{proposition}\label{frobenius-algebras-cude}
Let $\tau :Y_{r,n}(0)\rightarrow \mathbb{K}$ be defined as in \eqref{frobenius-forms-cude}. Then for all $a$ and $b$ in $Y_{r,n}(0)$, we have $\tau(ab)=\tau(\phi(b)a).$
\end{proposition}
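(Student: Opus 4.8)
The plan is to reduce both sides to a single combinatorial quantity attached to the symmetric group. By bilinearity it suffices to prove $\tau(ab)=\tau(\phi(b)a)$ when $a$ and $b$ range over the $\mathbb{K}$-basis $\{g_w t_1^{a_1}\cdots t_n^{a_n}\}$ of $Y_{r,n}(0)$, which spans the same space as \eqref{basis-yokonuam-hecke-1} since $g_w t_1^{a_1}\cdots t_n^{a_n}=t_{w(1)}^{a_1}\cdots t_{w(n)}^{a_n}\,g_w$ by the third relation in \eqref{rel-def-Y1}; write $t^{\mathbf a}:=t_1^{a_1}\cdots t_n^{a_n}$. The key input is a sharpening of the computation in the proof that $Y_{r,n}(0)$ is Frobenius: for $x,y\in\mathfrak{S}_n$, if $x*y$ denotes the Demazure ($0$-Hecke) product (so $s_i*y=s_iy$ when $\ell(s_iy)>\ell(y)$ and $s_i*y=y$ otherwise, extended associatively), then
\[ g_x g_y \;=\; (-1)^{\ell(x)+\ell(y)-\ell(x*y)}\, t_{x,y}\, g_{x*y}, \]
where $t_{x,y}\in\mathcal{T}$ is a product of idempotents $e_{a,b}$. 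This is proved by induction on $\ell(x)$: write $x=s_i x'$ with $\ell(x')=\ell(x)-1$, substitute $g_{x'}g_y$, and move $g_i$ past $t_{x',y}$ using \eqref{relations-cude} (which only turns $e_{a,b}$-factors into other such factors); one is left with $g_i g_{x'*y}$, which is $g_{x*y}$ if the length increases and $-e_i g_{x'*y}=-e_i g_{x*y}$ if it drops, and in each case the exponent of $-1$ and the asserted form are maintained. Since $p\nmid r$, the augmentation $\varepsilon\colon\mathcal{T}\to\mathbb{K}$, $t_i\mapsto1$, satisfies $\varepsilon(e_{a,b})=1$, hence $\varepsilon(t_{x,y})=1$.

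Using $\tau(t^{\mathbf c}g_w)=\delta_{w,w_0}$ for every exponent vector $\mathbf c$ (the form \eqref{frobenius-forms-cude} ignores the $t$-part) together with $g_w t^{\mathbf c}=t^{w(\mathbf c)}g_w$, one rewrites, for $a=g_w t^{\mathbf a}$ and $b=g_v t^{\mathbf b}$, the product $ab$ as $g_w g_v\, t^{\mathbf c}$ for a suitable $\mathbf c$, whence
\[ \tau(ab)=(-1)^{\ell(w)+\ell(v)-\ell(w*v)}\,\varepsilon(t_{w,v})\,\delta_{w*v,\,w_0}=(-1)^{\ell(w)+\ell(v)-N}\,\delta_{w*v,\,w_0}, \]
with $N:=\ell(w_0)$, independently of $\mathbf a$ and $\mathbf b$. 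On the other side, since $\phi$ is an algebra automorphism and $s_i\mapsto s_{n-i}$ is the Coxeter automorphism $u\mapsto w_0uw_0$ of $\mathfrak{S}_n$, Matsumoto's lemma gives $\phi(g_v)=g_{\bar v}$ with $\bar v:=w_0vw_0$ and $\ell(\bar v)=\ell(v)$, while $\phi(t^{\mathbf b})$ is again a monomial in the $t_i$; hence $\phi(b)a=g_{\bar v}g_w\,t^{\mathbf c'}$, and the same computation gives $\tau(\phi(b)a)=(-1)^{\ell(v)+\ell(w)-N}\,\delta_{\bar v*w,\,w_0}$. The two expressions carry the same sign, so the proposition reduces to the purely combinatorial equivalence $w*v=w_0\iff\bar v*w=w_0$. (Equivalently, one could note that $\tau$ factors through $Y_{r,n}(0)/(t_i-1:i)\cong\mathcal{H}_{0}(\mathfrak{S}_n)$ and deduce the claim from the analogous statement for the $0$-Hecke algebra; the computation is essentially the same.)

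To establish the equivalence I would first prove the lemma that for $u,z\in\mathfrak{S}_n$, $u*z=w_0$ if and only if $u^{-1}w_0\le z$ in the Bruhat order: the Demazure product of a word is the Bruhat-largest element admitting a reduced subword of that word, so applying this to a concatenation of a reduced word for $u$ with one for $z$, and using that $\ell(u')+\ell(u'^{-1}w_0)=N$ automatically for every $u'\le u$, the condition $u*z=w_0$ reduces to the existence of $u'\le u$ with $u'^{-1}w_0\le z$, which holds iff $u^{-1}w_0\le z$. Applying the lemma, $w*v=w_0\iff w^{-1}w_0\le v$, and $\bar v*w=w_0\iff\bar v^{-1}w_0\le w$; since $\bar v^{-1}w_0=w_0v^{-1}w_0\cdot w_0=w_0v^{-1}$, the latter reads $w_0v^{-1}\le w$. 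Finally, because inversion preserves and multiplication by $w_0$ reverses the Bruhat order, $w^{-1}w_0\le v\iff w_0w\le v^{-1}\iff w\ge w_0v^{-1}$, which is exactly $w_0v^{-1}\le w$; this proves the equivalence, and with it the proposition.

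I expect the main obstacle to be the structural identity $g_xg_y=(-1)^{\ell(x)+\ell(y)-\ell(x*y)}t_{x,y}g_{x*y}$ with $t_{x,y}$ a product of the $e_{a,b}$ --- carefully tracking both the sign and the idempotent factor through the induction, where each braid relation or each reduction $g_i^{2}=-e_ig_i$ must be reconciled with the commutation rules \eqref{relations-cude} --- together with the Bruhat-order lemma describing when $x*y$ equals $w_0$. The accompanying bookkeeping with the $t$-variables is harmless, since $\tau$ detects them only through the augmentation $\varepsilon$.
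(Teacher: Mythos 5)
Your argument is correct, and it is worth noting that the paper states this proposition without any proof at all, so there is nothing to compare it against except the general strategy suggested by the preceding Frobenius computation. Your two key ingredients both check out: the identity $g_xg_y=(-1)^{\ell(x)+\ell(y)-\ell(x*y)}\,t_{x,y}\,g_{x*y}$ with $t_{x,y}$ a product of the idempotents $e_{a,b}$ follows by the induction you describe (the base case is exactly $g_i^2=-e_ig_i$, and \eqref{relations-cude} keeps the idempotent factor of the required form), and since $\varepsilon(e_{a,b})=1$ the form $\tau$ sees only the sign and the Kronecker delta $\delta_{x*y,w_0}$; the Bruhat-order lemma $u*z=w_0\iff u^{-1}w_0\le z$ together with the standard facts that inversion preserves and multiplication by $w_0$ reverses the Bruhat order then yields $w*v=w_0\iff (w_0vw_0)*w=w_0$, and the signs on the two sides agree because both equal $(-1)^{\ell(w)+\ell(v)-\ell(w_0)}$ whenever the deltas are nonzero. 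Two small remarks: your parenthetical observation is actually the cleanest formalization --- $\tau$ is the pullback of the $0$-Hecke Frobenius form along the algebra surjection $Y_{r,n}(0)\to\mathcal{H}_0(\mathfrak{S}_n)$, $t_i\mapsto 1$, $g_i\mapsto h_i$, and $\phi$ descends to the automorphism induced by $w\mapsto w_0ww_0$, so the statement reduces verbatim to the known $0$-Hecke case (Fayers); and you are implicitly using that $\phi$ is a well-defined algebra involution (one must check $\phi(e_i)=e_{n-i}$ so that the quadratic relation is preserved), which the paper also asserts without proof but which is immediate.
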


\subsection{$Y_{r,n}(0)$ is standardly based}
Recall that the notion of a standardly based algebra was introduced in [DR, Definition 1.2.1] and a complete classification of a finite dimensional standardly based algebra over a field is also provided in [DR, Theorem 2.4.1]. The proof of the following theorem is inspired by that of [YL, Theorem 5.1].
\begin{theorem}\label{standardly-based-algebras}
$Y_{r,n}(0)$ is a standardly based algebra with a standard basis $\{E_{\chi}g_{w}\:|\:\chi\in \mathrm{Irr}(\mathcal{T})\text{ and }w\in\mathfrak{S}_{n}\}.$ Moreover, the simple modules of $Y_{r,n}(0)$ over $\mathbb{K}$ are exactly those which are given in Theorem \ref{classification-result1}.
\end{theorem}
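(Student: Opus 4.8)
The plan is to verify the two axioms in the definition of a standardly based algebra from [DR, Definition 1.2.1] for the basis $\{E_{\chi}g_{w}\}$, and then read off the simple modules from [DR, Theorem 2.4.1] together with Theorem \ref{classification-result1}. Recall that a standard basis requires an index set $\Lambda$ with a partial order, together with finite sets $I(\lambda), J(\lambda)$ for each $\lambda\in\Lambda$, so that the algebra has a basis $\{a^{\lambda}_{ij}\mid \lambda\in\Lambda,\ i\in I(\lambda),\ j\in J(\lambda)\}$ satisfying: (i) for every $x\in Y_{r,n}(0)$ one has $x\,a^{\lambda}_{ij}\equiv \sum_{i'\in I(\lambda)} f_{\lambda}(x;i',i)\,a^{\lambda}_{i'j} \pmod{Y_{r,n}(0)^{>\lambda}}$ with coefficients independent of $j$, and symmetrically $a^{\lambda}_{ij}\,x\equiv \sum_{j'\in J(\lambda)} f'_{\lambda}(x;j,j')\,a^{\lambda}_{ij'}$; and (ii) the filtration by the spans $Y_{r,n}(0)^{>\lambda}$ (and $Y_{r,n}(0)^{\geq\lambda}$) is a two-sided filtration.

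First I would set up the combinatorics of the index set. Following [YL, Theorem 5.1], one decomposes $\mathfrak{S}_n$ using parabolic cosets: for a composition governing where the character $\chi$ is constant, let $\mathfrak{S}_{\chi}$ be the stabilizer of $\chi$ under the $\mathfrak{S}_n$-action on $\mathrm{Irr}(\mathcal{T})$, which is the Young subgroup attached to the blocks $(I_{\mathrm{c}})_j$. Each $w\in\mathfrak{S}_n$ factors uniquely, and one groups the basis elements $E_{\chi}g_w$ according to the double coset data so as to produce a cellular-type indexing $\lambda=(\chi, \text{double coset})$ with row index in a set of distinguished left coset representatives and column index in a set of distinguished right coset representatives. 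The partial order on $\Lambda$ will be induced by the Bruhat order on $\mathfrak{S}_n$ (refined by the character data), so that multiplication by a generator $g_i$ or $t_j$ either stays within the span of elements with the same or larger $\lambda$ — this is exactly where the relations in Proposition \ref{propo-presenta-2}, together with the multiplication rule $g_ig_w = g_{s_iw}$ or $-e_ig_w$ already recorded in the proof of Proposition \ref{frobenius-algebras }, do the work. The key computation is that $g_i\,E_{\chi}g_w = E_{s_i(\chi)}g_ig_w$, and when $\ell(s_iw)<\ell(w)$ the term $-e_ig_w$ is a combination of $E_{\chi'}g_{w'}$ with $w'$ of strictly smaller length (after moving the idempotent $e_i$ through), so it lands in a strictly larger piece of the order; one checks the coefficients do not depend on the column index because left multiplication never touches the right coset part.

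The main obstacle will be bookkeeping: verifying that the structure constants for left multiplication are genuinely independent of the column index $j$ (and dually for right multiplication), and that the order is compatible so that the $>\lambda$ and $\geq\lambda$ spans are honest two-sided ideals modulo lower terms. This is the same phenomenon handled in [YL], and I would follow that argument closely, substituting $E_{\chi}$ for the analogous idempotents there and using the relations \eqref{relations-cude} and \eqref{idempotent-relation-chi} to push idempotents past the $g_i$'s. Once the standard basis axioms are confirmed, [DR, Theorem 2.4.1] gives that the simple modules are parametrized by those $\lambda\in\Lambda$ for which the bilinear form on the associated standard module (the "cell" module) is non-zero, each such simple being the head of that standard module. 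Finally I would match this parametrization with Theorem \ref{classification-result1}: since $Y_{r,n}(0)/\mathrm{rad}$ is commutative and every simple is one-dimensional, the non-degenerate $\lambda$'s are precisely those where $w$ can be taken to be the longest element of a Young subgroup inside $\mathfrak{S}_{\chi}$ — equivalently, the data of a composition $J_j$ of each $|(I_{\mathrm{c}})_j|$ — recovering exactly the set $(\mathrm{c},\mathrm{J})$ of Theorem \ref{classification-result1}, and one checks the resulting one-dimensional representations agree with the $\varphi_{(\mathrm{c},\mathrm{J})}$ written there.
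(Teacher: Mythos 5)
Your overall framework (verify the axioms of [DR, Definition 1.2.1], then apply [DR, Theorem 2.4.1]) is the right one, but two things go wrong in the execution. First, the double-coset cell structure you propose is not what [YL] or the paper uses, and it is precisely where your "bookkeeping" obstacle lives: you would have to check that $s_i u$ remains a distinguished left coset representative for the same double coset, and that the changing character $\chi\mapsto s_i(\chi)$ does not move you to a different cell index $\lambda$ --- none of which you verify. The paper's structure is far simpler: take $\Lambda=\{(\chi,w)\}$ to be the \emph{full} set of pairs, with $I(\chi,w)$ and $J(\chi,w)$ both singletons $\{E_{\chi}g_{w}\}$, and order $\Lambda$ by any total order refining the length of $w$. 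Then the independence-of-column-index condition is vacuous, and the axioms follow in one line from the displayed multiplication rule.

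Second, and more seriously, your justification of the filtration axiom rests on a miscalculation. In the descent case $\ell(s_iw)<\ell(w)$ one has $g_ig_w=-e_ig_w=-\sum_{\chi:\,s_i(\chi)=\chi}E_{\chi}g_{w}$: the $g_w$ factor is unchanged, so this is \emph{not} a combination of $E_{\chi'}g_{w'}$ with $w'$ of strictly smaller length. (And even if it were, shorter elements sit \emph{below} $\lambda$ in the length-increasing order needed for the ascent case, so they would violate, not satisfy, the requirement of landing in $Y_{r,n}(0)^{>\lambda}$.) The correct statement is that $E_{\chi'}g_i\cdot E_{\chi}g_w$ equals $0$, or $E_{\chi'}g_{s_iw}$ with $\ell(s_iw)>\ell(w)$ (hence in a strictly higher cell), or $-E_{\chi}g_w$ (the same cell, when $s_i(\chi)=\chi'=\chi$); this is what makes the singleton-cell structure work. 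Finally, for the classification you should compute the forms directly: $\beta_{(\chi,w)}\neq 0$ forces $w(\chi)=\chi$ and then, by the $0$-Hecke computation, $w$ must be the longest element $w_J$ of a Young subgroup $W_J\subseteq\mathfrak{S}_{\chi}$, which yields the indexing set of Theorem \ref{classification-result1}. Inferring which cells are non-degenerate from Theorem \ref{classification-result1} and the commutativity of the semisimple quotient is not logically false, but it forfeits the point of giving a second, independent derivation.
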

\begin{proof}
Since we have
\begin{align*}
g_{i}g_{w}=\begin{cases}
g_{s_{i}w} & \text{if }\ell(s_{i}w)> \ell(w),
\\
-\sum\limits_{\chi; s_{i}(\chi)=\chi}E_{\chi}g_{w} & \text{if }\ell(s_{i}w)< \ell(w),
\end{cases}
\end{align*}
we get that
\begin{align}\label{standard-basis-relation}
E_{\chi'}g_{i}\cdot E_{\chi}g_{w}=E_{\chi'}E_{s_{i}(\chi)}g_{i}g_{w}=\begin{cases}
0& \text{if } s_{i}(\chi)\neq \chi',\\
E_{\chi'}g_{s_{i}w} & \text{if }s_{i}(\chi)=\chi'\text{ and }\ell(s_{i}w)> \ell(w),
\\
-E_{\chi'}g_{w} & \text{if }s_{i}(\chi)=\chi'=\chi\text{ and }\ell(s_{i}w)< \ell(w),\\
0 & \text{if }s_{i}(\chi)=\chi'\neq\chi\text{ and }\ell(s_{i}w)< \ell(w).
\end{cases}
\end{align}

Set $\Lambda :=\{(\chi, w)\:|\:\chi\in \mathrm{Irr}(\mathcal{T})\text{ and }w\in\mathfrak{S}_{n}\}.$ We fix a total ordering on the elements of $\mathfrak{S}_{n}$ such that $w_{i}> w_{j}$ whenever $\ell(w_{i})> \ell(w_{j})$ and extend it to $\Lambda$. Assume that $I(\chi, w)$ and $J(\chi, w)$ consist of only one element $E_{\chi}g_{w}$ for each $(\chi, w)\in \Lambda.$ By \eqref{standard-basis-relation}, we see that the basis $\{E_{\chi}g_{w}\}$ is a standard basis.

In order to classify the simple modules of $Y_{r,n}(0)$ using [DR, Theorem 2.4.1], we must choose those $(\chi, w)$ such that $\beta_{(\chi, w)}(E_{\chi}g_{w}, E_{\chi}g_{w})\neq 0.$ It is easy to see that they exactly correspond to the elements contained in the set stated in Theorem \ref{classification-result1}, if we identify $\mathrm{Irr}(\mathcal{T})$ with $C^{n}$ and set $w$ to be the longest element of $W_{J}$, where $W_{J}$ is the Young subgroup of $\mathfrak{S}_{n}$ associated to $J.$
\end{proof}

\section{Appendix. Representations of nil Yokonuma-Hecke algebras}

In the appendix, we present two different approaches to classifying the simple modules of the nil Yokonuma-Hecke algebra ${}^{0}\!Y_{r,n}^{f}$ over an algebraically closed field $\mathbb{K}$ of characteristic $p$ such that $p$ does not divide $r.$

${}^{0}\!Y_{r,n}^{f}$ is a $\mathbb{K}$-associative algebra generated by the elements $T_{1},\ldots, T_{n-1}$ and $t_{1},\ldots, t_{n}$ with relations:
\begin{equation}\label{rel-def-nilY1}\begin{array}{rclcl}
t_i^r\hspace*{-7pt}&=&\hspace*{-7pt}1 && \mbox{for all $i=1,\ldots,n$;}\\[0.2em]
t_it_j\hspace*{-7pt}&=&\hspace*{-7pt}t_jt_i &&  \mbox{for all $i,j=1,\ldots,n$;}\\[0.1em]
T_it_j\hspace*{-7pt}&=&\hspace*{-7pt}t_{s_i(j)}T_i && \mbox{for all $i=1,\ldots,n-1$ and $j=1,\ldots,n$;}\\[0.1em]
T_iT_j\hspace*{-7pt}&=&\hspace*{-7pt}T_jT_i && \mbox{for all $i,j=1,\ldots,n-1$ such that $\vert i-j\vert \geq 2$;}\\[0.1em]
T_iT_{i+1}T_i\hspace*{-7pt}&=&\hspace*{-7pt}T_{i+1}T_iT_{i+1} && \mbox{for all $i=1,\ldots,n-2$;}\\[0.1em]
T_{i}^{2}\hspace*{-7pt}&=&\hspace*{-7pt}0 && \mbox{for all $i=1,\ldots,n-1$.}
\end{array}
\end{equation}

Let $w\in \mathfrak{S}_{n}$ be with a reduced expression $w=s_{i_1}\cdots s_{i_r}$. Then $T_{w} :=T_{i_{1}}\cdots T_{i_{r}}$ is independent of the choice of the reduced expression of $w.$ Let ${}^{0}\!H_{n}^{f}$ denote the subalgebra of ${}^{0}\!Y_{r,n}^{f}$ generated by $T_{1},\ldots, T_{n-1}$, which is exactly the nil Hecke algebra. Let $\mathbb{K}T$ denote the subalgebra generated by $t_{1},\ldots, t_{n}.$ It is easy to see that the elements
\begin{align}\label{basis-nilyokonuam-hecke-1}
\{t_{1}^{a_{1}}\cdots t_{n}^{a_{n}}T_{w}\:|\:0\leq a_{1},\ldots,a_{n}\leq r-1\text{ and }w\in\mathfrak{S}_{n}\}
\end{align}
form a $\mathbb{K}$-basis of ${}^{0}\!Y_{r,n}^{f}.$

The following proposition gives a classification of simple modules of ${}^{0}\!Y_{r,n}^{f}.$ 
\begin{proposition}\label{simple-modules-nilyhecke}
The Jacobson radical of ${}^{0}\!Y_{r,n}^{f}$ is the two-sided ideal generated by the elements $T_{w}$ for all $w\neq 1.$ Moreover, there are $r^{n}$ non-isomorphic simple modules of ${}^{0}\!Y_{r,n}^{f},$ which are all of dimension $1.$ 
\end{proposition}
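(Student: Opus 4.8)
The plan is to mimic, in the simpler nil setting, the argument used for Theorem~\ref{classification-result1}, exploiting the fact that here the quadratic relation is $T_i^2=0$ rather than $g_i^2=q+(q-1)e_ig_i$. First I would show that the two-sided ideal $N$ generated by all $T_w$ with $w\neq 1$ (equivalently, the ideal generated by $T_1,\ldots,T_{n-1}$) is nilpotent. Using the basis \eqref{basis-nilyokonuam-hecke-1} together with the commutation rule $T_it_j=t_{s_i(j)}T_i$, one sees that $N$ is spanned over $\mathbb{K}T$ by the elements $t_1^{a_1}\cdots t_n^{a_n}T_w$ with $w\neq 1$, and that multiplication sends such an element times another into an element of the form $t_1^{b_1}\cdots t_n^{b_n}T_z$ with $\ell(z)\geq \ell(w)+1$ whenever the product is nonzero — here the key observation, exactly as in the proof of Proposition~\ref{frobenius-algebras }, is that $T_xT_y$ is either $0$ or equals $T_{xy}$ with $\ell(xy)=\ell(x)+\ell(y)$, because the only quadratic relation is $T_i^2=0$ (there is no lower-order term). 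Consequently $N^{\ell(w_0)+1}=0$, so $N\subseteq \mathrm{rad}({}^{0}\!Y_{r,n}^{f})$.

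Next I would identify the quotient. Since ${}^{0}\!Y_{r,n}^{f}/N$ is generated by the images $\overline{t}_1,\ldots,\overline{t}_n$ subject only to $\overline{t}_i^r=1$ and $\overline{t}_i\overline{t}_j=\overline{t}_j\overline{t}_i$ (all relations involving the $T_i$ become trivial once the $T_i$ are killed), it is isomorphic to the group algebra $\mathbb{K}[(\mathbb{Z}/r\mathbb{Z})^n]$. Because $p\nmid r$, Maschke's theorem (and the hypothesis that $\mathbb{K}$ is algebraically closed and contains $\zeta$) shows this quotient is split semisimple, a product of $r^n$ copies of $\mathbb{K}$. Hence $\mathrm{rad}({}^{0}\!Y_{r,n}^{f})\subseteq N$, and combined with the previous paragraph we get $N=\mathrm{rad}({}^{0}\!Y_{r,n}^{f})$, which proves the first assertion. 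The semisimple quotient being $r^n$ copies of $\mathbb{K}$ immediately yields that there are exactly $r^n$ pairwise non-isomorphic simple modules, each of dimension $1$: explicitly, for each tuple $(c_1,\ldots,c_n)\in\{0,\ldots,r-1\}^n$ the module is $\mathbb{K}$ with $t_i$ acting by $\zeta^{c_i}$ and every $T_i$ acting by $0$.

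The main obstacle is verifying nilpotence of $N$ cleanly, i.e. controlling products of basis elements; but this is genuinely routine given the structure of the nil Hecke relations, and in fact is easier than the corresponding step in Theorem~\ref{classification-result1} since no idempotents $e_i$ intervene and there are no commutator terms to track — one only needs Matsumoto's lemma and the length additivity of $T_xT_y$ recalled in the proof of Proposition~\ref{frobenius-algebras }. I would also remark that this gives a second, module-theoretic description: the $r^n$ simple modules are obtained by inflating the simple modules of $\mathbb{K}T$ along the surjection ${}^{0}\!Y_{r,n}^{f}\twoheadrightarrow \mathbb{K}T$, $T_i\mapsto 0$, which is the ``second approach'' alluded to at the start of the appendix.
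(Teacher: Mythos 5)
Your argument is correct and is essentially the paper's own proof: both show the ideal $N$ generated by the $T_w$, $w\neq 1$, is nilpotent via length-additivity of $T_xT_y$ (your bound $N^{\ell(w_0)+1}=0$ is even sharper than the paper's $J^{n(n-1)+1}=0$), identify the quotient with $\mathbb{K}T\simeq\mathbb{K}[(\mathbb{Z}/r\mathbb{Z})^n]$, and invoke semisimplicity from $p\nmid r$ to conclude $N=\mathrm{rad}$ and count the $r^n$ one-dimensional simples. Only your closing aside is slightly off: the ``second approach'' of the appendix is the standardly based algebra argument of Theorem~\ref{standardly-based-algebrasnil-yhecke}, not the inflation description, which is just a restatement of the first.
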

\begin{proof}
Since $J^{n(n-1)+1}=0$ and ${}^{0}\!Y_{r,n}^{f}/J\simeq \mathbb{K}T$, we have $J=\mathrm{rad}({}^{0}\!Y_{r,n}^{f}).$ Moreover, all the simple modules of ${}^{0}\!Y_{r,n}^{f}$ are of dimension $1$, which are indexed by $\text{Irr}(T)$, where $\text{Irr}(T)$ denotes the set of characters of $\mathbb{K}T.$ In fact, for each $\chi\in \text{Irr}(T),$ the associated simple module $\eta_{\chi}$ is defined by 
\begin{align*}
\eta_{\chi}(t_{j})=\chi(t_{j}) \quad\text{ for }1\leq j\leq n\quad\text{ and }\quad \eta_{\chi}(T_{i})=0 \quad\text{ for }1\leq i\leq n-1.
\end{align*}

Since $t_{j}E_{\chi}T_{w_{0}}=\chi(t_{j})E_{\chi}T_{w_{0}}$ for $1\leq j\leq n$ and $T_{i}E_{\chi}T_{w_{0}}=0$ for $1\leq i\leq n-1.$ Thus, $\eta_{\chi}$ can also be realized as the minimal left ideal ${}^{0}\!Y_{r,n}^{f}E_{\chi}T_{w_{0}}$ in the regular representation. In fact, the $\mathbb{K}E_{\chi}T_{w_{0}},$ for $\chi\in \text{Irr}(T),$ give all the minimal non-zero two-sided ideals.
\end{proof}

There is an involution $\psi$ on ${}^{0}\!Y_{r,n}^{f}$ defined by $\phi(T_{i})=T_{n-i}$ for $1\leq i\leq n-1$ and $\phi(t_{j})=t_{n+1-j}$ for $1\leq j\leq n.$ Let $\lambda :{}^{0}\!Y_{r,n}^{f}\rightarrow \mathbb{K}$ be a linear map by $\tau(t_{1}^{a_{1}}\cdots t_{n}^{a_{n}}T_{w})=\delta_{w, w_{0}}.$ Then we have the following result.
\begin{proposition}\label{nil-yhecke-frobenius}
${}^{0}\!Y_{r,n}^{f}$ is a Frobenius algebra with a Frobenius form $\lambda$. Moreover, we have $\lambda(xy)=\lambda(\psi(y)x)$ for all $x, y\in {}^{0}\!Y_{r,n}^{f}.$
\end{proposition}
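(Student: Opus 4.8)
The plan is to mimic the argument used for the analogous statement about $Y_{r,n}(0)$ in Proposition~\ref{frobenius-algebras-cude}, adapting it to the nil setting. First I would verify that $\lambda$ (which the paper calls $\tau$ in the displayed formula, a typo for $\lambda$) is a Frobenius form, i.e.\ that $\ker\lambda$ contains no nonzero left or right ideal. Using the $\mathbb{K}$-basis \eqref{basis-nilyokonuam-hecke-1}, write an arbitrary $0\neq h\in {}^{0}\!Y_{r,n}^{f}$ as a $\mathbb{K}$-linear combination of elements $t_{1}^{a_{1}}\cdots t_{n}^{a_{n}}T_{w}$; using the commutation relation $T_{i}t_{j}=t_{s_i(j)}T_i$ I can equally well rewrite $h$ in the form $\sum T_{y}\,t_{1}^{b_{1}}\cdots t_{n}^{b_{n}}$ with $T_{y}t^{\mathbf b}$ appearing with nonzero coefficient for at least one pair $(y,\mathbf b)$; choose $y$ of maximal length among these.

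Next I would exploit the nil relations $T_i^2=0$ and the braid relations, which give, exactly as in the proof of Proposition~\ref{frobenius-algebras }, that for $x,z\in\mathfrak S_n$ the product $T_xT_z$ equals $T_{xz}$ if $\ell(xz)=\ell(x)+\ell(z)$ and is $0$ otherwise. Hence setting $j:=T_{w_0 y^{-1}}$ one gets $j\cdot T_y = T_{w_0}$ while $j\cdot T_{y'}=0$ for every $y'\neq y$ with $\ell(y')\le\ell(y)$ (here one uses that $\ell(w_0y^{-1})+\ell(y)=\ell(w_0)$ and that no shorter $y'$ can be completed to $w_0$ by left-multiplication by $w_0y^{-1}$). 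Since $t$'s commute past $T$'s up to permuting indices, $j\cdot h$ then has the basis element $T_{w_0}t_{1}^{b_{1}'}\cdots t_{n}^{b_{n}'}$ (equivalently $t_1^{b_1'}\cdots t_n^{b_n'}T_{w_0}$ after one more commutation) occurring with nonzero coefficient, so $\lambda(jh)\neq 0$; symmetrically $k:=T_{y^{-1}w_0}$ gives $\lambda(hk)\neq 0$. This shows $\lambda$ is nondegenerate and ${}^{0}\!Y_{r,n}^{f}$ is Frobenius.

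For the identity $\lambda(xy)=\lambda(\psi(y)x)$ it suffices by bilinearity to check it on basis elements $x=t^{\mathbf a}T_{u}$, $y=t^{\mathbf b}T_{v}$. I would show both sides vanish unless the product has a term supported on $T_{w_0}$, which by the length-additivity criterion above forces $\ell(u)+\ell(v)=\ell(w_0)$ and $uv=w_0$, i.e.\ $v=u^{-1}w_0$; in that case one computes $xy = t^{\mathbf a}T_u t^{\mathbf b}T_v = t^{\mathbf a}\,t^{u(\mathbf b)}\,T_u T_v = t^{\mathbf a+u(\mathbf b)}T_{w_0}$, so $\lambda(xy)=1$ (interpreting exponents mod $r$), and one checks that $\psi(y)x$ likewise reduces to $t^{\mathbf c}T_{w_0}$ for a suitable $\mathbf c$, using $\psi(T_v)=T_{w_0 v^{-1}w_0}$, $\psi(t^{\mathbf b})=t^{w_0(\mathbf b)}$, and $w_0 v^{-1}w_0\cdot u = w_0 w_0^{-1}u\cdot w_0^{-1}\cdot\dots$; being careful that $\psi$ is an anti-automorphism is the subtle point, since the stated defining relation $\psi(T_i)=T_{n-i}$ makes $\psi$ a reflection of the Dynkin diagram, hence an algebra \emph{anti}-automorphism on the braid part, which is precisely what makes $\lambda\circ\mu$ symmetric.

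The main obstacle I anticipate is the bookkeeping in the last step: one must track the permutation $\mathbf b\mapsto u(\mathbf b)$ of the torus exponents and verify that the exponent vector produced by $\psi(y)x$ agrees, as an element of $(\mathbb Z/r\mathbb Z)^n$ with the one produced by $xy$ when both land on $T_{w_0}$; this uses $w_0 s_i w_0 = s_{n-i}$ and $w_0(i)=n+1-i$ in a coherent way, together with the fact that $\lambda$ only records the coefficient of $T_{w_0}$ and is blind to which monomial in the $t_j$ multiplies it. Everything else is a direct transcription of the $q=0$ Yokonuma case with $e_i$ replaced by $0$, which only simplifies the computation.
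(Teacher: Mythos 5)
Your strategy---transcribing the proof of Proposition~\ref{frobenius-algebras } with $e_i$ replaced by $0$---is the natural one (the paper gives no proof of this proposition), and your length-additivity analysis of $T_xT_z$ together with the choice $j=T_{w_0y^{-1}}$ is correct. But there is a genuine gap at the decisive step, inherited from the model proof rather than introduced by you. You conclude from ``$T_{w_0}t_1^{b_1'}\cdots t_n^{b_n'}$ occurs in $jh$ with nonzero coefficient'' that $\lambda(jh)\neq 0$. This does not follow: $\lambda(t_1^{a_1}\cdots t_n^{a_n}T_{w_0})=1$ for \emph{every} exponent vector, so when the maximal-length $y$ is accompanied in $h$ by several distinct $t$-monomials, $\lambda$ simply adds their coefficients, which can cancel. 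The defect is not repairable within this argument, because $\lambda$ as defined is genuinely degenerate once $r\geq 2$: take a nontrivial character $\chi$ and set $h=E_{\chi}T_{w_0}$. Since $T_uE_{\chi}T_{w_0}=E_{u(\chi)}T_uT_{w_0}=0$ for all $u\neq 1$ and $t^{\mathbf a}E_{\chi}=\chi(t^{\mathbf a})E_{\chi}$, the left ideal generated by $h$ is just $\mathbb{K}E_{\chi}T_{w_0}\neq 0$, while $\lambda(E_{\chi}T_{w_0})=\prod_{i}\bigl(\tfrac{1}{r}\sum_{s=0}^{r-1}\chi(t_i)^{s}\bigr)=0$ because $\chi(t_i)\neq 1$ for some $i$. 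So $\ker\lambda$ contains a nonzero left ideal. (The same example with $T$ replaced by $g$ shows that the published proof of Proposition~\ref{frobenius-algebras } has the identical flaw.)

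The repair is to make the form separate the $t$-monomials: define $\lambda(t_1^{a_1}\cdots t_n^{a_n}T_w)=\delta_{w,w_0}\,\delta_{\mathbf a,\mathbf 0}$ on the basis \eqref{basis-nilyokonuam-hecke-1}, equivalently $\lambda(E_{\chi}T_w)=\delta_{w,w_0}$ for all $\chi$. Then $\lambda\bigl(t^{\mathbf a}T_u\cdot t^{\mathbf b}T_v\bigr)=\lambda\bigl(t^{\mathbf a+u(\mathbf b)}T_uT_v\bigr)$ is nonzero precisely when $v=u^{-1}w_0$ with $\ell(u)+\ell(v)=\ell(w_0)$ and $\mathbf a+u(\mathbf b)\equiv\mathbf 0$; the Gram matrix is a permutation matrix and nondegeneracy is immediate, with no need to choose $j$ and $k$ at all. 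One more correction for your last step: the diagram flip $\psi(T_i)=T_{n-i}$, $\psi(t_j)=t_{n+1-j}$ extends to an algebra \emph{automorphism}, not an anti-automorphism (all defining relations of \eqref{rel-def-nilY1} are visibly preserved), and on reduced words it gives $\psi(T_v)=T_{w_0vw_0}$, $\psi(t^{\mathbf b})=t^{w_0(\mathbf b)}$. With the corrected $\lambda$, the identity $\lambda(xy)=\lambda(\psi(y)x)$ then reduces to the two equivalences $w_0vw_0\cdot u=w_0\Leftrightarrow uv=w_0$ (with lengths adding on both sides, since conjugation by $w_0$ preserves length) and $w_0(\mathbf b)+w_0vw_0(\mathbf a)\equiv\mathbf 0\Leftrightarrow\mathbf a+u(\mathbf b)\equiv\mathbf 0$ when $u=w_0v^{-1}$, both of which hold; this is exactly the bookkeeping you anticipated, and it closes once $\psi$ is treated as an automorphism.
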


We also have the following result.
\begin{theorem}\label{standardly-based-algebrasnil-yhecke}
${}^{0}\!Y_{r,n}^{f}$ is a standardly based algebra with a standard basis $\{E_{\chi}T_{w}\:|\:\chi\in \mathrm{Irr}(T)\text{ and }w\in\mathfrak{S}_{n}\}.$ Moreover, the simple modules of ${}^{0}\!Y_{r,n}^{f}$ over $\mathbb{K}$ are exactly those which are given in Proposition \ref{simple-modules-nilyhecke}.
\end{theorem}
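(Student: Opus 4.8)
The plan is to adapt the argument of Theorem~\ref{standardly-based-algebras}, which becomes shorter here because the quadratic relation is now $T_i^2=0$. I would first check that $\{E_\chi T_w\mid \chi\in\mathrm{Irr}(T),\ w\in\mathfrak{S}_n\}$ is a $\mathbb{K}$-basis: the primitive idempotents $E_\chi=P_{c_1}(t_1)\cdots P_{c_n}(t_n)$ form a $\mathbb{K}$-basis of the subalgebra $\mathbb{K}T$, so for each fixed $w$ we have $\mathrm{span}\{E_\chi T_w\mid\chi\}=\mathbb{K}T\cdot T_w=\mathrm{span}\{t_1^{a_1}\cdots t_n^{a_n}T_w\mid 0\le a_i\le r-1\}$, and summing over $w$, comparing with \eqref{basis-nilyokonuam-hecke-1}, and counting $|\mathrm{Irr}(T)|\cdot|\mathfrak{S}_n|=r^n n!=\dim {}^{0}\!Y_{r,n}^{f}$ finishes it. From $T_it_j=t_{s_i(j)}T_i$ one gets $T_iE_\chi=E_{s_i(\chi)}T_i$ and hence $T_wE_\chi=E_{w(\chi)}T_w$; from $T_i^2=0$, the braid relations and Matsumoto's lemma one gets the nil Hecke rule $T_iT_w=T_{s_iw}$ if $\ell(s_iw)>\ell(w)$ and $T_iT_w=0$ if $\ell(s_iw)<\ell(w)$, and, iterating it, $T_uT_v=T_{uv}$ if $\ell(uv)=\ell(u)+\ell(v)$ and $T_uT_v=0$ otherwise; the right-handed versions hold as well.

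Combining these one computes
\[
E_{\chi'}T_i\cdot E_\chi T_w=E_{\chi'}E_{s_i(\chi)}T_iT_w=
\begin{cases}
E_{\chi'}T_{s_iw} & \text{if } s_i(\chi)=\chi' \text{ and } \ell(s_iw)>\ell(w),\\
0 & \text{otherwise,}
\end{cases}
\]
together with $E_\chi T_w\cdot E_{\chi'}T_i=\delta_{\chi,w(\chi')}E_\chi T_{ws_i}$ if $\ell(ws_i)>\ell(w)$ and $0$ otherwise, and $E_{\chi'}\cdot E_\chi T_w=\delta_{\chi',\chi}E_\chi T_w$, $E_\chi T_w\cdot E_{\chi'}=\delta_{\chi,w(\chi')}E_\chi T_w$. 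Now put $\Lambda=\{(\chi,w)\mid\chi\in\mathrm{Irr}(T),\ w\in\mathfrak{S}_n\}$, fix a total order on $\mathfrak{S}_n$ with $w_i>w_j$ whenever $\ell(w_i)>\ell(w_j)$, extend it to $\Lambda$, and take singleton index sets $I(\chi,w)=J(\chi,w)=\{E_\chi T_w\}$. The relations above show that left or right multiplication of $E_\chi T_w$ by a generator $E_{\chi'}$ or $T_i$ either rescales it, kills it, or produces a basis element $E_{\chi''}T_{w''}$ with $\ell(w'')>\ell(w)$, hence of strictly larger index; a routine induction on word length in the generators then shows that the span $\mathcal{A}^{>(\chi,w)}$ of the basis elements of index larger than $(\chi,w)$ is a two-sided ideal and that $\{E_\chi T_w\}$ satisfies the axioms of a standard basis in the sense of [DR, Definition~1.2.1]. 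Hence ${}^{0}\!Y_{r,n}^{f}$ is standardly based.

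Finally, to identify the simple modules through [DR, Theorem~2.4.1] I would evaluate the bilinear form $\beta_{(\chi,w)}$, which for singleton index sets is the scalar determined by $E_\chi T_w\cdot E_\chi T_w\equiv\beta_{(\chi,w)}E_\chi T_w\pmod{\mathcal{A}^{>(\chi,w)}}$. Using $T_wE_\chi=E_{w(\chi)}T_w$ gives $E_\chi T_w\cdot E_\chi T_w=\delta_{w(\chi),\chi}\,E_\chi T_w^2$, and $T_w^2$ equals $T_{w^2}$ if $\ell(w^2)=2\ell(w)$ and $0$ otherwise; for $w\neq1$ the first case has $\ell(w^2)>\ell(w)$, so in both cases $E_\chi T_w^2\in\mathcal{A}^{>(\chi,w)}$, while $T_1^2=1$. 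Thus $\beta_{(\chi,w)}\neq0$ exactly when $w=1$, and [DR, Theorem~2.4.1] yields exactly $|\mathrm{Irr}(T)|=r^n$ pairwise non-isomorphic simple modules, all one-dimensional, indexed by $\{(\chi,1)\mid\chi\in\mathrm{Irr}(T)\}$; the simple module attached to $(\chi,1)$ is the one on which each $T_i$ acts by $0$ (since $T_iE_\chi=E_{s_i(\chi)}T_{s_i}\in\mathcal{A}^{>(\chi,1)}$) and each $t_j$ by $\chi(t_j)$, i.e.\ it is $\eta_\chi$ of Proposition~\ref{simple-modules-nilyhecke}. The one substantive point is the vanishing of $\beta_{(\chi,w)}$ for $w\neq1$, which is precisely where the nil relation $T_i^2=0$ makes the module category collapse further than in the $0$-Yokonuma case of Theorem~\ref{standardly-based-algebras}: there the weaker relation $g_i^2=-e_ig_i$ leaves the forms $\beta_{(\chi,w_J)}$ nonzero for $w_J$ the longest element of a Young subgroup.
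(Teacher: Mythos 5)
Your proposal is correct and follows essentially the same route as the paper: the same multiplication rule $E_{\chi'}T_i\cdot E_\chi T_w$, the same length-compatible total order on $\Lambda$ with singleton index sets, and the same observation that $\beta_{(\chi,w)}\neq 0$ forces $w=1$ via $T_uT_v=T_{uv}$ or $0$. You are in fact somewhat more careful than the paper in verifying the basis claim and in identifying the resulting simples with $\eta_\chi$; just note that in the line ``$T_1^2=1$'' the subscript $1$ must be read as the identity of $\mathfrak{S}_n$ (so $T_1=1$), not as the generator $T_1$, which satisfies $T_1^2=0$.
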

\begin{proof}
Since we have
\begin{align*}
T_{i}T_{w}=\begin{cases}
T_{s_{i}w} & \text{if }\ell(s_{i}w)> \ell(w),
\\
0 & \text{if }\ell(s_{i}w)< \ell(w),
\end{cases}
\end{align*}
we get that
\begin{align}\label{standard-basis-relation-nilyhecke}
E_{\chi'}T_{i}\cdot E_{\chi}T_{w}=E_{\chi'}E_{s_{i}(\chi)}T_{i}T_{w}=\begin{cases}
0& \text{if } s_{i}(\chi)\neq \chi',\\
E_{\chi'}T_{s_{i}w} & \text{if }s_{i}(\chi)=\chi'\text{ and }\ell(s_{i}w)> \ell(w),
\\
0 & \text{if }s_{i}(\chi)=\chi'\text{ and }\ell(s_{i}w)< \ell(w).
\end{cases}
\end{align}

Set $\Delta :=\{(\chi, w)\:|\:\chi\in \mathrm{Irr}(T)\text{ and }w\in\mathfrak{S}_{n}\}.$ We fix a total ordering on the elements of $\mathfrak{S}_{n}$ such that $w_{i}> w_{j}$ whenever $\ell(w_{i})> \ell(w_{j})$ and extend it to $\Delta$. Assume that $I(\chi, w)$ and $J(\chi, w)$ consist of only one element $E_{\chi}T_{w}$ for each $(\chi, w)\in \Delta.$ By \eqref{standard-basis-relation-nilyhecke}, we see that the basis $\{E_{\chi}T_{w}\}$ is a standard basis.

In order to classify the simple modules of ${}^{0}\!Y_{r,n}^{f}$ using [DR, Theorem 2.4.1], we must choose those $(\chi, w)$ such that $\beta_{(\chi, w)}(E_{\chi}g_{w}, E_{\chi}g_{w})\neq 0.$ Since $T_{w_{1}}T_{w_{2}}=T_{w_{1}w_{2}}$ if $\ell(w_{1}w_{2})=\ell(w_{1})+\ell(w_{2})$ and $T_{w_{1}}T_{w_{2}}=0$ otherwise, it is easy to see that $\beta_{(\chi, w)}(E_{\chi}T_{w}, E_{\chi}T_{w})\neq 0$ if and only if $w=1.$
\end{proof}

\noindent{\bf Acknowledgements.}
The author was partially supported by the National Natural Science Foundation of China (No. 11601273).


\end{document}